\date{}
\renewcommand{\uppercasenonmath}[1]{}
   \DeclareMathSizes{\@xipt}{\@xipt}{7}{2}
\newtheorem*{thm*}{Main Theorem}
\newtheorem{thm}{Theorem}[section]
\newtheorem{cor}[thm]{Corollary}
\newtheorem*{cor*}{Corollary}
\newtheorem{lem}[thm]{Lemma}
\newtheorem*{lem*}{Lemma}
\newtheorem{prop}[thm]{Proposition}
\newtheorem*{prop*}{Proposition}
\theoremstyle{definition}
\newtheorem{remark}[thm]{Remark}
\newtheorem{exa}[thm]{Example}
\newtheorem{df}[thm]{Definition}
\newtheorem*{df*}{Definition}
\newtheorem*{conj*}{Conjecture}
\newcommand{\pf}{\noindent\begin {proof}}
\newcommand{\epf}{\end{proof}}
\newtheorem*{ack*}{ACKNOWLEDGEMENTS}
\newcommand{\bc}{\begin{center}}
\newcommand{\ec}{\end{center}}
\def\Qcoh{\mathfrak{Qcoh}}
\newcommand{\R}{\mathcal{O}_X}
\newcommand{\Hom}{\mathsf{Hom}}
\newcommand{\Ext}{\mathsf{Ext}}
\newcommand{\mcC}{\mathcal{C}}
\newcommand{\Rep}{\mathrm{Rep}}
\newcommand{\mathcolon}{\colon\,}
\begin{document}
\title{On some applications of balanced pairs and their relation with cotorsion triplets}
\author{Sergio Estrada}
\address[S. Estrada]{
Departamento de Matem\'aticas, Universidad de Murcia,
Campus de Espinardo, Espinardo (Murcia) 30100, Spain}
\email{sestrada@um.es}%
\author{Marco A. P\'erez}
\address[M. A. P\'erez]{Instituto de Matem\'atica y Estad\'{\i}stica ``Prof. Ing. Rafael Laguardia". Universidad de la Rep\'ublica. Montevideo 11300, Uruguay}
\email{mperez@fing.edu.uy}
\author{Haiyan Zhu}
\address[H. Zhu]{College of Science, Zhejiang University of Technology, Hangzhou 310023, China}
\email{hyzhu@zjut.edu.cn}

\subjclass[2010]{18G25, 16G20, 18E10, 18C15 }%
\keywords{balanced pair, cotorsion triplet, quiver representation, flat balance
\\
}%

\begin{abstract}
Balanced pairs appear naturally in the realm of Relative Homological Algebra associated to the balance of right derived functors of the $\Hom$ functor. A natural source to get such pairs is by means of cotorsion triplets. In this paper we study the connection between balanced pairs and cotorsion triplets by using recent quiver representation techniques. In doing so, we find a new characterization of abelian categories having enough projectives and injectives in terms of the existence of complete hereditary cotorsion triplets. We also give a short proof of the lack of balance for derived functors of $\Hom$ computed by using flat resolutions which extends the one showed by Enochs in the commutative case.
\end{abstract}

\maketitle
\pagestyle{myheadings}
\markboth{\rightline {\scriptsize S. Estrada, M. A. P\'{e}rez and H. Zhu}}
         {\leftline{\scriptsize On some applications of balanced pairs and their relation with cotorsion triplets}}


\section{Introduction}
Let $\mathcal C$ be an abelian category and $\mathcal F$ be a precovering class. This means that for each object $M\in \mathcal C$ there exists a (not necessarily exact) complex
\[
\cdots \to F_1\to F_0\to M\to 0,
\]
usually called an $\mathcal F$-resolution of $M$, where $F_i\in \mathcal F$ for every $i\geq 0$, which is exact after applying the functor $\Hom_{\mathcal C}(F,-)$ for each $F\in \mathcal F$. The corresponding deleted complex is unique up to homotopy, so we can compute right derived functors of $\Hom$, denoted by $\mathcal{F}\textrm{-}\Ext^n$. In many cases there is ``balance'' in the computation of such functors, meaning that there exists a preenveloping class $\mathcal L$ such that $\mathcal{F}\textrm{-}\Ext^n(M,N)$ can be also obtained from the right derived functors $\mathcal{L}\textrm{-}\Ext^n$ computed from of a coresolution of $N$,
\[
0\to N\to L_0\to L_1\to \cdots,
\]
where $L_i\in \mathcal L$ for every $i\geq 0$. This phenomenon can be summarized by saying that the pair $(\mathcal{F,L})$ is a balanced pair (in the sense of Chen \cite{Chen}) or equivalently that the functor $\Hom$ is right balanced by $\mathcal F\times\mathcal L$ (see Enochs and Jenda \cite[Section 8.2]{EJ}).

Thus balanced pairs have gained attention in the last years in the context of Relative Homological Algebra (see for instance \cite{Chen, Enochs, EJ, EJTX, EHHS}). Our goal in this paper is to deepen in the relation between balanced and cotorsion pairs or, to be more precise, between balanced pairs and complete and hereditary cotorsion triplets. Recall that a triplet $(\mathcal{F,G,L})$ is called a cotorsion triplet provided that $(\mathcal{F,G})$ and $(\mathcal{G,L})$ are cotorsion pairs. The reader can have in mind the trival cotorsion triplet $(\mathsf{Proj}(R),\mathsf{Mod}(R),\mathsf{Inj}(R))$ in the category $\mathsf{Mod}(R)$ of left $R$-modules (where $\mathsf{Proj}(R)$ and $\mathsf{Inj}(R)$ denote the classes of projective and injective left $R$-modules respectively) as the canonical example of a complete and hereditary cotorsion triplet. But there are many other instances of such triplets occuring in practice (see Example \ref{ex:triplets}).

Complete hereditary cotorsion triplets are defined in Definition \ref{df:triplets}. They are a natural source to providing with balanced pairs. In short, a complete hereditary cotorsion triplet $(\mathcal{F,G,L})$ in an abelian category with enough projectives and injectives gives a balanced pair $(\mathcal{F,L})$ (see Enochs, Jenda, Torrecillas and Xu \cite[Theorem 4.1]{EJTX}).

Thus, it seems natural to wonder about the converse of this result. This appears explicitly as an open problem in \cite[Open Problems]{EJTX}.

\medskip\par\noindent
{\bf Question}: Find conditions for a balanced pair $(\mathcal{F,L})$ to induce a complete hereditary cotorsion triplet $(\mathcal{F,G,L})$.
\medskip\par\noindent

One of our motivations in this paper is to shed any light on this question.  We show in Proposition~\ref{prop:from_balance_to_triplets} that a balanced pair $(\mathcal{F,L})$ in an abelian category with enough projectives and injectives gives rise to a complete hereditary cotorsion triplet $(\mathcal{F,G,L})$, provided that the class $\mathcal{F}$ is resolving and special precovering, $\mathcal{L}$ is  coresolving and special preenveloping, and that $\mathcal{F}$ and $\mathcal{G}$ satisfy the relations $\mathcal{F} \cap \mathcal{F}^\perp \subseteq {}^\perp\mathcal{L}$ and ${}^\perp\mathcal{L} \cap \mathcal{L} \subseteq \mathcal{F}^\perp$.

Let us point out that we cannot expect to get such triplet from \emph{any} balanced pair. For instance, given any ring $R$ with identity, the pair $(\mathsf{Mod}(R),\mathsf{Mod}(R))$ is trivially a balanced pair, but the triplet $(\mathsf{Mod}(R),\mathcal G,\mathsf{Mod}(R))$ is complete if and only if $R$ is quasi-Frobenius.

However, we give a partial answer to the previous question by working with the abelian category $\Rep(Q,\mathcal C)$ of $\mathcal C$-valued representations over a non-discrete quiver $Q$. The precise formulation of our result is the following. The proof is in Corollary~\ref{cor 2}.

\medskip\par\noindent
{\bf Theorem.} If $(\mathcal{F},\mathcal{H})$ and $(\mathcal{G},\mathcal{L})$ are complete hereditary cotorsion pairs in $\mathsf{Mod}(R)$, then the following are equivalent:
\begin{itemize}
\item [(a)]$\mathcal{H}=\mathcal{G}$ (that is, $(\mathcal{F,G,L})$ is a complete and hereditary cotorsion triplet in $\mathsf{Mod}(R)$).
\item [(c)] $(\Phi(\mathcal{F}),\Psi(\mathcal{L}))$ is a balanced pair in $\Rep(Q,\mathsf{Mod}(R))$ for some non-discrete left and right rooted quiver $Q$.
\end{itemize}

The classes $\Phi(\mathcal{F})$ and $\Psi(\mathcal{L})$ are defined by Holm and J\o rgensen in \cite{HJ}. We recall in Section~\ref{sec:quiver} their definition.

Notice that one easy example of left and right rooted quiver is the 1-arrow quiver $Q:\bullet\to \bullet$, and so in this case $\Rep(Q,\mathsf{Mod}(R))$ is nothing but the category $\mathsf{Mor}(R)$ of morphisms of $R$-modules. But there are many other (possibly infinite) quivers satisfying this condtion. In short, the previous Theorem assures that in order to look for conditions to get an equivalence between balanced pairs and cotorsion triplets, we need to move to a ``bigger" category. This result allows to characterize quasi-Frobeinus rings (Corollary \ref{cor:qf}) in terms of the so-called \emph{monomorphism category} and \emph{epimorphism category} as considered by Li, Luo and Zhang in \cite{LiZ,LuoZ}. And also we recover and extend the recent characterization of virtually Gorenstein rings given by Zareh-Khoshchehreh, Asgharzadeh and Divaani-Aazar in \cite[Theorem 3.10]{ZAD}.

While studying cotorsion triplets, we found the following interesting result of independent interest (see Theorem~\ref{theorem.cotorsion.triplets}).

\medskip\par\noindent
{\bf Theorem.} An abelian category $\mathcal C$ has enough projectives and injectives if, and only if, there exists a hereditary and complete cotorsion triplet in $\mathcal C$.

\medskip\par\noindent

This theorem allows us to present a slightly stronger version of the mentioned result \cite[Theorem 4.1]{EJTX} by Enochs, Jenda, Torrecillas and Xu (see Proposition~\ref{prop.relation.balance.triplets}).

Finally, we give in Theorem \ref{theorem.noflatbalance} a short and categorical proof about the lack of balance with respect to the class of flat modules over a left Noetherian non-perfect ring. Our method is different from the one used by Enochs in \cite[Theorem 4.1]{Enochs} for the commutative case. As a consequence we give a negative answer in Corollary 4.2 to the question 6 posted in \cite[Section 6]{Enochs}. Namely, we show in Corollary~\ref{cor:balance_scheme} that there is no balance for the class of flat quasi-coherent modules on a Noetherian and semi-separated scheme.


\section{Preliminaries}

Throughout, $\mathcal{C}$ will denote an abelian category.


\subsection*{Cotorsion pairs in abelian categories}

Two classes of objects $\mathcal{X}$ and $\mathcal{Y}$ in $\mathcal{C}$ form a \emph{cotorsion pair} $(\mathcal{Y,X})$ if the following two equalities hold:
\begin{align*}
\mathcal{Y} & = {}^{\perp_1}\mathcal{X} := \{ C \in \mcC \mbox{ : } \Ext^1_{\mcC}(C,X) = 0 \mbox{ for every } X \in \mathcal{X} \}, \\
\mathcal{X} & = \mathcal{Y}^{\perp_1} := \{ D \in \mcC \mbox{ : } \Ext^1_{\mcC}(Y,D) = 0 \mbox{ for every } Y \in \mathcal{Y} \}.
\end{align*}
Since $\mathcal{C}$ does not necessarily have enough projectives and/or injectives, the extension groups $\Ext^i_{\mcC}(A,B)$ are defined via its Yoneda description as certain equivalent classes of $i$-fold extensions.

A cotorsion pair $(\mathcal{Y,X})$ in $\mathcal{C}$ is called:
\begin{enumerate}
\item \label{def:complete} \emph{Complete} if for every object $C \in \mathcal{C}$ there exist short exact sequences
\[
0 \to X \to Y \to C \to 0 \mbox{ \ and \ } 0 \to C \to X' \to Y' \to 0
\]
with $Y, Y' \in \mathcal{Y}$ and $X, X' \in \mathcal{X}$.

\item \label{def:hereditary} \emph{Hereditary} if $\Ext^i_{\mcC}(Y,X) = 0$ for every $Y \in \mathcal{Y}$ and $X \in \mathcal{X}$, and $i > 0$.
\end{enumerate}

Recall that a class $\mathcal{Y}$ of objects in $\mathcal{C}$ is \emph{resolving} if $\mathcal{Y}$ is closed under extensions and under kernels of epimorphisms with domain and codomain in $\mathcal{Y}$, and if $\mathcal{Y}$ contains the class of projective objects in $\mathcal{C}$. Dually, one has the notion of \emph{coresolving class}. We say that a cotorsion pair $(\mathcal{Y,X})$ in $\mathcal{C}$ is \emph{quasi-hereditary} if $\mathcal{Y}$ is resolving and $\mathcal{X}$ is coresolving. In some references, quasi-hereditary cotorsion pairs are called hereditary, but the two notions are not the same in general. Indeed, the condition defining hereditary cotorsion pairs in \eqref{def:hereditary} above is stronger than asking $\mathcal{Y}$ and $\mathcal{X}$ to be resolving and coresolving, respectively. This can be appreciated in the following result, whose proof is well known.

\begin{prop}
Every hereditary cotorsion pair in $\mathcal{C}$ is quasi-hereditary. If in addition, $\mathcal{C}$ has enough projectives and injectives, then every quasi-hereditary cotorsion pair in $\mathcal{C}$ is hereditary.
\end{prop}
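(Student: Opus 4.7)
The plan is to treat the two implications separately, each via a direct Ext-vanishing/dimension-shifting argument using only the Yoneda description of $\Ext^n_{\mathcal{C}}$.

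For the forward direction (hereditary implies quasi-hereditary), the strategy is to verify the three defining axioms of a resolving class for $\mathcal{Y}$; the statement for $\mathcal{X}$ being coresolving is then entirely dual. Any projective $P$ satisfies $\Ext^1_{\mathcal{C}}(P,X)=0$, so $P\in{}^{\perp_1}\mathcal{X}=\mathcal{Y}$, hence $\mathcal{Y}$ contains the projectives (vacuously if there are none). Closure under extensions is the routine consequence of the Yoneda long exact sequence applied to a short exact sequence with both outer terms in $\mathcal{Y}$. The only nontrivial axiom is closure under kernels of epimorphisms: given $0\to Y'\to Y\to Y''\to 0$ with $Y,Y''\in\mathcal{Y}$ and $X\in\mathcal{X}$, the segment $\Ext^1_{\mathcal{C}}(Y,X)\to\Ext^1_{\mathcal{C}}(Y',X)\to\Ext^2_{\mathcal{C}}(Y'',X)$ sandwiches $\Ext^1_{\mathcal{C}}(Y',X)$ between two zeros. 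The vanishing of the right-hand term is precisely where the full hereditary hypothesis $\Ext^2_{\mathcal{C}}(Y'',X)=0$ is used, and is exactly what a bare cotorsion-pair assumption fails to supply.

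For the converse, assume $\mathcal{C}$ has enough projectives and injectives, fix $Y\in\mathcal{Y}$ and $X\in\mathcal{X}$, and induct on $i\geq 1$ to show $\Ext^i_{\mathcal{C}}(Y,X)=0$. The base case $i=1$ is the cotorsion-pair axiom itself. For the inductive step, choose an epimorphism $P\twoheadrightarrow Y$ from a projective $P$ with kernel $K$; since $\mathcal{Y}$ is resolving and contains $P$ and $Y$, it contains $K$. Because $\Ext^j_{\mathcal{C}}(P,X)=0$ for every $j\geq 1$, the Yoneda long exact sequence produces an isomorphism $\Ext^i_{\mathcal{C}}(Y,X)\cong \Ext^{i-1}_{\mathcal{C}}(K,X)$, and the inductive hypothesis closes the argument. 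A symmetric proof using enough injectives and coresolving of $\mathcal{X}$ gives an equivalent route via cosyzygies of $X$.

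I do not anticipate a serious obstacle: the proof is routine once one is comfortable with the Yoneda description of $\Ext^n_{\mathcal{C}}$ and its long exact sequences in each variable, which are available purely at the Yoneda level without derived-functor machinery. The only conceptual point worth emphasizing is the role of the ``enough projectives and injectives'' assumption in the converse, which is exactly what enables the syzygy/cosyzygy reduction; without it, quasi-heredity carries no automatic information about higher Ext groups, and the two notions genuinely diverge.
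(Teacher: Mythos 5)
Your proof is correct and is precisely the standard argument; the paper itself omits the proof, remarking only that it is well known. Both halves are handled soundly: the forward implication relies only on the Yoneda long exact sequence in each variable, which is available in any abelian category, and you correctly identify that closure under kernels of epimorphisms (respectively cokernels of monomorphisms) is the one axiom that genuinely needs the full $\Ext^{\geq 2}$ vanishing, not just the cotorsion-pair condition. For the converse, the projective syzygy reduction $\Ext^i_{\mathcal{C}}(Y,X)\cong\Ext^{i-1}_{\mathcal{C}}(K,X)$ combined with the resolving hypothesis placing $K$ back in $\mathcal{Y}$ is exactly the dimension-shifting argument one should give, and your observation that this is where the ``enough projectives and injectives'' hypothesis enters is the correct conceptual point.
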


If $(\mathcal{Y,X})$ is a hereditary cotorsion pair in $\mathcal{C}$, we actually have that:
\begin{align*}
\mathcal{Y} & = {}^\perp\mathcal{X} := \{ C \in \mathcal{C} \mbox{ : } \Ext^i_{\mathcal{C}}(C,X) = 0 \mbox{ for every $X \in \mathcal{X}$ and $i > 0$} \}, \\
\mathcal{X} & = \mathcal{Y}^\perp := \{ D \in \mathcal{C} \mbox{ : } \Ext^j_{\mathcal{C}}(Y,D) = 0 \mbox{ for every $Y \in \mathcal{Y}$ and $j > 0$} \}.
\end{align*}


\subsection*{Precovering and preenveloping classess}

Let $\mathcal F$ be a class of objects in $\mathcal C$. A morphism ${\phi} \mathcolon {F} \to {M}$ in $\mathcal C$ is called an \emph{$\mathcal F$-precover} of $M$ if $F \in \mathcal F$ and
\[
\Hom_{\mathcal C}(F',F) \to \Hom_{\mathcal C} (F',M) \to 0
\]
is an exact sequence of abelian groups for every object $F' \in \mathcal F$. Further, if ${\phi}\mathcolon {F}\to{M}$ is an $\mathcal{F}$-precover and $\ker(\phi)\in \mathcal F^{\perp_1}$ then $\phi$ is called a \emph{special $\mathcal F$-precover}. If every object in $\mathcal C$ has a (special) $\mathcal F$-precover, then the class $\mathcal F$ is called \emph{(special) precovering}.

The dual notions are \emph{(special) preenvelope} and \emph{(special) preenveloping classes}. It is easy to observe that, if $(\mathcal {Y,X})$ is a complete cotorsion pair in $\mathcal{C}$, then $\mathcal Y$ is special precovering and $\mathcal X$ is special preenveloping.

By using a standard argument (known as Salce's trick) we get the following lemma.

\begin{lem}\label{lema.complete.hered.sprecovering}
Suppose that $\mathcal C$ has enough projectives and injectives. Then, the following hold:
\begin{enumerate}
\item Let $\mathcal F$ be a special precovering class in $\mathcal{C}$ which is also resolving and closed under direct summands. Then, $(\mathcal F,\mathcal F^{\perp})$ is a complete hereditary cotorsion pair in $\mathcal{C}$.

\item Let $\mathcal L$ be a special preenveloping class in $\mathcal{C}$ which is also coresolving and closed under direct summands. Then, $({^{\perp}\mathcal L},\mathcal L)$ is a complete hereditary cotorsion pair in $\mathcal{C}$.
\end{enumerate}
\end{lem}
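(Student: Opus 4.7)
The two parts of the lemma are dual, so my plan is to treat (1) in detail and then leave (2) to a symmetric argument. The strategy is the classical three-step recipe: first verify that $(\mathcal{F},\mathcal{F}^{\perp_1})$ is a cotorsion pair, then produce both approximation sequences, and finally bootstrap $\Ext^1$-vanishing to vanishing in all positive degrees.

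For the cotorsion pair identity, the inclusion $\mathcal{F}\subseteq {}^{\perp_1}(\mathcal{F}^{\perp_1})$ is immediate. For the reverse inclusion, fix $M\in {}^{\perp_1}(\mathcal{F}^{\perp_1})$ and pick a special $\mathcal{F}$-precover $\phi\colon F\to M$. Since $\mathcal{F}$ is resolving it contains the projectives, and since $\mathcal{C}$ has enough projectives, any epimorphism $P\twoheadrightarrow M$ from a projective factors through $\phi$; hence $\phi$ is an epimorphism. Writing $0\to K\to F\to M\to 0$ with $K\in\mathcal{F}^{\perp_1}$, the hypothesis on $M$ forces this sequence to split, so $M$ is a direct summand of $F$, and closure under summands places $M$ in $\mathcal{F}$.

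For completeness, one half is automatic: the special precover sequence $0\to K\to F\to M\to 0$ is already an $\mathcal{F}$-approximation. For the other half I would run \emph{Salce's trick}. Embed $M$ into an injective via $0\to M\to I\to N\to 0$ (possible because $\mathcal{C}$ has enough injectives), and then choose a special $\mathcal{F}$-precover of $N$, giving $0\to K\to F\to N\to 0$ with $K\in\mathcal{F}^{\perp_1}$. Pulling back $I\to N\leftarrow F$ yields a short exact sequence $0\to M\to E\to F\to 0$ together with $0\to K\to E\to I\to 0$. Since $I$ is injective and therefore lies in $\mathcal{F}^{\perp_1}$, and since $\mathcal{F}^{\perp_1}$ is always closed under extensions (a direct consequence of the long $\Ext$ sequence), we obtain $E\in\mathcal{F}^{\perp_1}$, producing the desired special $\mathcal{F}^{\perp_1}$-preenvelope.

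Finally, I would upgrade $\Ext^1$-vanishing to hereditariness by dimension shifting. For $Y\in\mathcal{F}$ and $X\in\mathcal{F}^{\perp_1}$, form $0\to Y'\to P\to Y\to 0$ with $P$ projective; the resolving property gives $Y'\in\mathcal{F}$, and the long exact sequence yields $\Ext^{i+1}_{\mathcal{C}}(Y,X)\cong \Ext^{i}_{\mathcal{C}}(Y',X)$, so an induction on $i\geq 1$ produces $\Ext^i_{\mathcal{C}}(Y,X)=0$. Part (2) then follows by the evident dual argument, swapping projectives with injectives, resolving with coresolving, and precovers with preenvelopes. I do not expect a genuine obstacle here: the only delicate points are (i) using the resolving hypothesis together with enough projectives to force the precover to be epic, and (ii) invoking enough injectives at the pullback step — both of which are exactly the hypotheses provided.
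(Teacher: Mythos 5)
Your proof is correct and is precisely the argument the paper has in mind: the paper itself gives no written proof of this lemma, simply remarking that it follows ``by using a standard argument (known as Salce's trick),'' and your write-up is the standard execution of that trick — special precover gives one half of completeness, Salce's pullback against an injective embedding gives the other, closure under summands plus the split special precover sequence identifies ${}^{\perp_1}(\mathcal{F}^{\perp_1})$ with $\mathcal{F}$, and dimension shifting along projective resolutions (using the resolving hypothesis) upgrades $\Ext^1$-vanishing to hereditariness, which in turn identifies $\mathcal{F}^{\perp_1}$ with $\mathcal{F}^{\perp}$. The one point worth making explicit, which you did handle, is that the special precover is an epimorphism because $\mathcal{F}$ contains the projectives and $\mathcal{C}$ has enough of them; without that the pullback step would not yield a short exact sequence.
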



\subsection*{Resolutions and coresolutions}

Let $\mathcal X$ be a class of objects in $\mathcal C$ and $M$ an object in $\mathcal{C}$. An \emph{$\mathcal X$-resolution} $X_\bullet\to M$ of $M$ is a (not necessarily exact) complex
\[
\cdots\to X_1\to X_0\to M\to 0,
\]
with each $X_i\in \mathcal X$, which is exact when applying the functor $\Hom_{\mathcal C}(X,-)$, for every $X\in \mathcal X$. In this case, we will say that the complex $X_\bullet \to M$ is \emph{$\Hom_{\mathcal C}(\mathcal X,-)$-acyclic}. Dually, we have the notion of \emph{$\mathcal X$-coresolution} $M\to X^\bullet$ of $M$.

If $\mathcal X$ is precovering (respectively, $\mathcal X$ is preenveloping) it is easy to see that every $M$ in $\mathcal C$ has an $\mathcal X$-resolution (respectively, an $\mathcal X$-coresolution). See, for instance, Enochs and Jenda \cite[Proposition 8.1.3]{EJ}.


\section{Relation between balanced pairs and cotorsion pairs}

Let us begin this section recalling the definition of balanced pairs in abelian categories.

\begin{df}\label{def1}
A pair $(\mathcal{F},\mathcal{L})$ of additive subcategories in $\mathcal{C}$ is called a \textbf{balanced
pair} if the following conditions are satisfied:
\begin{itemize}
\item[(BP0)] $\mathcal{F}$ is precovering and $\mathcal{L}$ is preenveloping.

\item[(BP1)] For each object $M \in \mathcal{C}$, there is an $\mathcal{F}$-resolution $F_\bullet \rightarrow M$ which is ${\Hom}_{\mathcal{C}}(-,\mathcal{L})$-acyclic.

\item[(BP2)] For each object $M \in \mathcal{C}$, there is a $\mathcal{L}$-coresolution $M\rightarrow L^\bullet$ which is ${\Hom}_{\mathcal{C}}(\mathcal{F},-)$-acyclic.
\end{itemize}
A balanced pair is called \textbf{admissible} provided that each $\mathcal F$-precover is an epimorphism and each $\mathcal L$-preenvelope is a monomorphism.
\end{df}

We have the following useful characterization of balanced pairs:

\begin{lem}\label{lem 3}
Let $\mathcal{F}$ and $\mathcal{L}$ be a precovering and a preenveloping class in $\mathcal{C}$, respectively. Then, the following conditions are equivalent.
\begin{itemize}
\item[(a)] The pair $(\mathcal{F},\mathcal{L})$ is balanced.

\item[(b)] Each ${\Hom}_{\mathcal{C}}(\mathcal{F},-)$-acyclic and left exact sequence in $\mathcal{C}$ is also ${\Hom}_{\mathcal{C}}(-,\mathcal{L})$-acyclic, and each ${\Hom}_{\mathcal{C}}(-,\mathcal{L})$-acyclic and right exact sequence in $\mathcal{C}$ is also ${\Hom}_{\mathcal{C}}(\mathcal{F},-)$-acyclic.

\item[(c)] For each object $M \in \mathcal{C}$, there is a left exact sequence $0\rightarrow K\rightarrow F\rightarrow M\rightarrow0$ and a right exact sequence $0\rightarrow M\rightarrow L\rightarrow C\rightarrow0$, which are both ${\Hom}_{\mathcal{C}}(\mathcal{F},-)$-acyclic and ${\Hom}_{\mathcal{C}}(-,\mathcal{L})$-acyclic, where $F \in \mathcal{F}$ and $L \in \mathcal{L}$.
\end{itemize}
\end{lem}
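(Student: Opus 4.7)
I plan to establish the chain of implications $(a)\Rightarrow(c)\Rightarrow(b)\Rightarrow(a)$.

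For $(a)\Rightarrow(c)$, the idea is to truncate the resolutions and coresolutions provided by (BP1) and (BP2). By (BP1), every $M$ admits an $\mathcal F$-resolution $\cdots\to F_1\to F_0\to M\to 0$ which is $\Hom_{\mathcal C}(\mathcal F,-)$-acyclic by definition and $\Hom_{\mathcal C}(-,\mathcal L)$-acyclic by the balance condition. Setting $F:=F_0$ and $K:=\ker(F_0\to M)$, a dimension-shifting argument applied to the long exact sequences arising from $\Hom_{\mathcal C}(X,-)$ (for $X\in\mathcal F$) and $\Hom_{\mathcal C}(-,L)$ (for $L\in\mathcal L$) shows that the truncated left exact sequence $0\to K\to F\to M\to 0$ inherits both acyclicities. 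The right exact companion $0\to M\to L\to C\to 0$ is obtained dually from (BP2).

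For $(c)\Rightarrow(b)$, the delicate point is the surjectivity of $\Hom_{\mathcal C}(B,L)\to\Hom_{\mathcal C}(A,L)$ for a left exact $\Hom_{\mathcal C}(\mathcal F,-)$-acyclic sequence $0\to A\to B\to C\to 0$ and any $L\in\mathcal L$. I would apply (c) to $C$ to obtain a left exact doubly-acyclic sequence $0\to K_C\to F_C\to C\to 0$ with $F_C\in\mathcal F$. By the $\Hom_{\mathcal C}(\mathcal F,-)$-acyclicity of the given sequence, $F_C\to C$ lifts to $\alpha\colon F_C\to B$, and restriction to kernels yields $\beta\colon K_C\to A$. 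The resulting morphism of sequences expresses the class $[0\to A\to B\to C\to 0]$ as the pushout $\beta_{*}[0\to K_C\to F_C\to C\to 0]$ in $\Ext^1_{\mathcal C}(C,A)$. For any $f\colon A\to L$, the composite $f\circ\beta\colon K_C\to L$ lifts to $F_C\to L$ by the $\Hom_{\mathcal C}(-,\mathcal L)$-acyclicity of the doubly-acyclic sequence, which forces $f_{*}[0\to A\to B\to C\to 0]=0$; equivalently, $f$ extends to a morphism $B\to L$. The remaining exactness statements are formal, and the right exact case is dual via an $\mathcal L$-preenvelope of $A$.

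For $(b)\Rightarrow(a)$, I construct the resolutions iteratively. An $\mathcal F$-precover $F_0\to M$ with kernel $K_0$ produces a left exact sequence $0\to K_0\to F_0\to M\to 0$ that is $\Hom_{\mathcal C}(\mathcal F,-)$-acyclic by the precover property, hence by (b) also $\Hom_{\mathcal C}(-,\mathcal L)$-acyclic. Iterating on $K_0,K_1,\dots$ and splicing yields a complex $\cdots\to F_1\to F_0\to M\to 0$; standard long exact sequence arguments verify both acyclicities for the splice, giving (BP1). Condition (BP2) follows dually from $\mathcal L$-preenvelopes.

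The principal obstacle will be the Yoneda--Ext step in $(c)\Rightarrow(b)$: one must carefully express the given extension class as a pushout of the doubly-acyclic sequence produced by (c), and then invoke the $\Hom_{\mathcal C}(-,\mathcal L)$-acyclicity of that sequence to kill the pushout along an arbitrary $f\colon A\to L$. Once this extension-of-morphisms argument is cleanly organized, the other implications reduce to routine splicing, dimension shifting, and diagram chasing.
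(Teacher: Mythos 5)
Your route $(a)\Rightarrow(c)\Rightarrow(b)\Rightarrow(a)$ is genuinely different from the paper's, which proves $(a)\Rightarrow(b)$ by citing Chen's Proposition 2.2, takes $(b)\Rightarrow(c)$ as immediate, and proves $(c)\Rightarrow(a)$ by the iterative splicing that you reproduce (in substance) as your $(b)\Rightarrow(a)$. However, the step $(c)\Rightarrow(b)$ as you have written it has a genuine gap. The Yoneda--$\Ext$ pushout argument requires the sequences involved to be honest short exact sequences, but in this lemma (and in Chen's definition of balanced pair, which the paper follows) $\mathcal F$-precovers need \emph{not} be epimorphisms. The phrase ``left exact sequence $0\to A\to B\to C\to 0$'' means only that $A=\ker(B\to C)$; the map $B\to C$ may fail to be epi, and likewise for the sequence $0\to K_C\to F_C\to C\to 0$ produced by (c). Consequently $[0\to A\to B\to C\to 0]$ is not an element of $\Ext^1_{\mathcal C}(C,A)$, and the dictionary ``$f_*[\xi]=0$ if and only if $f$ extends over the middle object'' simply does not apply. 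If one tries to repair this by replacing $C$ with $\mathrm{im}(B\to C)$ and $\mathrm{im}(F_C\to C)$, the two genuine extensions live over different cokernels, the comparison map is a pullback (along $\mathrm{im}(F_C\to C)\hookrightarrow\mathrm{im}(B\to C)$) composed with a pushout, and killing the pushout under $f_*$ only extends $f$ to a proper subobject $P\subsetneq B$, not to $B$ itself. The hard direction, which Chen proves, instead runs through the relative comparison theorem for $\Hom_{\mathcal C}(\mathcal F,-)$-acyclic resolutions and a (relative) horseshoe-type argument, which is a different tool than the Yoneda pushout. (The injectivity part of $\Hom(-,\mathcal L)$-acyclicity you can indeed get by the lifting argument you describe; it is specifically the surjectivity $\Hom(B,L)\twoheadrightarrow\Hom(A,L)$ that your pushout argument fails to deliver.)

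A secondary, more cosmetic issue occurs in your $(a)\Rightarrow(c)$: you speak of ``dimension-shifting applied to the long exact sequences arising from $\Hom_{\mathcal C}(X,-)$ and $\Hom_{\mathcal C}(-,L)$,'' but since the $\mathcal F$-resolution need not be exact in $\mathcal C$, there are no long exact sequences in the usual sense. The truncation does work, but the surjectivity of $\Hom(F_0,L)\to\Hom(K_0,L)$ requires a comparison between the syzygy precover $F_1\to K_0$ and the precover that (BP1) supplies for $K_0$ (both are $\Hom_{\mathcal C}(\mathcal F,-)$-epimorphisms from objects of $\mathcal F$, so they factor through each other, and one uses $\Hom_{\mathcal C}(-,\mathcal L)$-acyclicity of the BP1 resolution to conclude). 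This is the right idea but deserves to be spelled out; it is not a long-exact-sequence dimension shift. Finally, note that your argument would go through verbatim \emph{if} the balanced pair were assumed admissible (precovers epi, preenvelopes mono), since then all the sequences really are short exact; the lemma, however, is stated without that hypothesis.
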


\pf
The implication (a) $\Rightarrow$ (b) follows from Chen's \cite[Proposition 2.2]{Chen}, while (b) $\Rightarrow$ (c) is clear. Let us finish the proof showing (c) $\Rightarrow$ (a). By the assumption (c), for each object $M \in \mathcal{C}$ there is a left exact sequence
\[
0 \rightarrow K_0 \rightarrow F_0 \rightarrow M \rightarrow 0
\]
in $\mathcal{C}$ with $F_0 \in \mathcal{F}$ which is ${\Hom}_{\mathcal{C}}(\mathcal{F},-)$-acyclic and ${\Hom}_{\mathcal{C}}(-,\mathcal{L})$-acyclic. Now, by applying (c) again to the object $K_0$ we get a left exact sequence
\[
0 \rightarrow K_1 \rightarrow F_1 \rightarrow K_0 \rightarrow 0
\]
with $F_1 \in \mathcal{F}$ which is ${\Hom}_{\mathcal{C}}(\mathcal{F},-)$-acyclic and ${\Hom}_{\mathcal{C}}(-,\mathcal{L})$-acyclic. Continuing this process, we obtain an $\mathcal{F}$-resolution $F_\bullet \rightarrow M$ which is ${\Hom}_{\mathcal{C}}(-,\mathcal{L})$-acyclic. The construction of a ${\Hom}_{\mathcal{C}}(\mathcal{F},-)$-acyclic $\mathcal{L}$-coresolution of $M$ is dual. Hence, (a) follows.
\epf


\subsection*{Balanced pairs vs. cotorsion pairs}

As a first consequence of the previous result, we can infer the following relation between cotorsion pairs and balanced pairs. From now on, we will denote by $\mathsf{Proj}(\mathcal{C})$ and $\mathsf{Inj}(\mathcal{C})$ the classes of projective and injective objects of $\mathcal{C}$, respectively.

\begin{prop}
Let $(\mathcal{F,H})$ and $(\mathcal{G,L})$ be cotorsion pairs in $\mathcal{C}$ such that the pair $(\mathcal{F,L})$ is balanced. Then, $\mathcal{F}\cap\mathcal{G}=\mathsf{Proj}(\mathcal{C})$ and $\mathcal{H}\cap\mathcal{L}= \mathsf{Inj}(\mathcal{C})$.
\end{prop}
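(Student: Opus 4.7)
The plan is to prove each equality by two inclusions, of which one is immediate and the other uses the balance hypothesis. For any projective $P$, $\Ext^1_{\mathcal{C}}(P,-)$ vanishes identically, so $P \in {}^{\perp_1}\mathcal{H} = \mathcal{F}$ and $P \in {}^{\perp_1}\mathcal{L} = \mathcal{G}$; hence $\mathsf{Proj}(\mathcal{C}) \subseteq \mathcal{F} \cap \mathcal{G}$, and the dual argument gives $\mathsf{Inj}(\mathcal{C}) \subseteq \mathcal{H} \cap \mathcal{L}$. These inclusions use only the cotorsion-pair hypotheses.

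For the reverse inclusion $\mathcal{F} \cap \mathcal{G} \subseteq \mathsf{Proj}(\mathcal{C})$, I would fix $X \in \mathcal{F} \cap \mathcal{G}$ and an arbitrary $M \in \mathcal{C}$, and prove $\Ext^1_{\mathcal{C}}(X,M) = 0$. Applying Lemma \ref{lem 3}(c) to $M$ produces a short exact sequence
\[
0 \to M \to L \to C \to 0
\]
with $L \in \mathcal{L}$ that is both $\Hom_{\mathcal{C}}(\mathcal{F},-)$-acyclic and $\Hom_{\mathcal{C}}(-,\mathcal{L})$-acyclic. Applying $\Hom_{\mathcal{C}}(X,-)$ yields the standard long exact sequence in $\Ext$, of which I inspect the segment
\[
\Hom_{\mathcal{C}}(X,L) \to \Hom_{\mathcal{C}}(X,C) \to \Ext^1_{\mathcal{C}}(X,M) \to \Ext^1_{\mathcal{C}}(X,L).
\]
The leftmost arrow is surjective by the $\Hom_{\mathcal{C}}(\mathcal{F},-)$-acyclicity, since $X \in \mathcal{F}$, and the rightmost group vanishes because $X \in \mathcal{G} = {}^{\perp_1}\mathcal{L}$ and $L \in \mathcal{L}$. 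Exactness then forces $\Ext^1_{\mathcal{C}}(X,M) = 0$, so $X$ is projective.

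The equality $\mathcal{H} \cap \mathcal{L} = \mathsf{Inj}(\mathcal{C})$ is handled by a fully dual argument: for $Y \in \mathcal{H} \cap \mathcal{L}$ and any $M$, I would use Lemma \ref{lem 3}(c) to produce a short exact sequence $0 \to K \to F \to M \to 0$ with $F \in \mathcal{F}$ that is $\Hom_{\mathcal{C}}(-,\mathcal{L})$-acyclic, and apply $\Hom_{\mathcal{C}}(-,Y)$; the vanishing $\Ext^1_{\mathcal{C}}(F,Y) = 0$ comes from $Y \in \mathcal{H} = \mathcal{F}^{\perp_1}$, while the surjectivity of $\Hom_{\mathcal{C}}(F,Y) \to \Hom_{\mathcal{C}}(K,Y)$ comes from $Y \in \mathcal{L}$ together with the acyclicity. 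The only subtlety I anticipate is reading Lemma \ref{lem 3}(c) correctly: my argument treats the two displayed sequences there as genuine short exact sequences so that the long $\Ext$-sequence is immediately available. If one prefers to read them as only left/right exact, the same $\Ext$-bookkeeping still goes through after passing to the associated image/coimage short exact sequences and transporting the acyclicity along the canonical maps.
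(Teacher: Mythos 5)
Your proof takes a genuinely different route from the paper's, and it has a gap. The paper proves $\mathcal{F}\cap\mathcal{G}\subseteq\mathsf{Proj}(\mathcal{C})$ by starting from an \emph{arbitrary} short exact sequence $0\to C\to D\to H\to 0$ representing an element of $\Ext^1_{\mathcal{C}}(H,C)$ with $H\in\mathcal{F}\cap\mathcal{G}$. Because $H\in\mathcal{G}={}^{\perp_1}\mathcal{L}$, applying $\Hom_{\mathcal{C}}(-,L)$ shows this sequence is $\Hom_{\mathcal{C}}(-,\mathcal{L})$-acyclic; being short exact it is in particular right exact, so Lemma~\ref{lem 3}(b) upgrades it to $\Hom_{\mathcal{C}}(\mathcal{F},-)$-acyclic, and then $H\in\mathcal{F}$ forces $\mathrm{id}_H$ to lift and the sequence to split. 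The crucial feature of this argument is that the input is always a \emph{genuine} short exact sequence, because it comes from an $\Ext$-class rather than from a precover or preenvelope.

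Your proof instead feeds the preenvelope sequence $0\to M\to L\to C\to 0$ from Lemma~\ref{lem 3}(c) into the long exact $\Ext$-sequence of $\Hom_{\mathcal{C}}(X,-)$. But that sequence is only right exact in the paper's sense: exact at $L$ and $C$, while the $\mathcal{L}$-preenvelope $M\to L$ need not be a monomorphism, since the proposition does not assume admissibility. Your proposed patch of passing to the image $N=\mathrm{im}(M\to L)$ and ``transporting the acyclicity'' only produces $\Ext^1_{\mathcal{C}}(X,N)=0$, which is not the desired $\Ext^1_{\mathcal{C}}(X,M)=0$. Indeed, writing $K=\ker(M\to L)$, the long exact sequence attached to $0\to K\to M\to N\to 0$, together with the $\mathcal{F}$-relative surjectivity of $\Hom_{\mathcal{C}}(X,M)\to\Hom_{\mathcal{C}}(X,N)$, gives only $\Ext^1_{\mathcal{C}}(X,K)\cong\Ext^1_{\mathcal{C}}(X,M)$, which shifts the problem to $K$ without resolving it. The fix is exactly the paper's: rather than invoking Lemma~\ref{lem 3}(c), take an arbitrary extension $0\to M\to D\to X\to 0$, note it is $\Hom_{\mathcal{C}}(-,\mathcal{L})$-acyclic because $X\in\mathcal{G}$, and apply Lemma~\ref{lem 3}(b) to deduce it is $\Hom_{\mathcal{C}}(\mathcal{F},-)$-acyclic, hence split. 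The dual half of your argument has the identical gap (the $\mathcal{F}$-precover in the left exact sequence need not be an epimorphism) and is repaired the same way.
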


\pf
Let us only prove the equality $\mathcal{F}\cap\mathcal{G}=\mathsf{Proj}(\mathcal{C})$. The corresponding statement with injectives follows in a dual manner. Since $(\mathcal{F,H})$ and $(\mathcal{G,L})$ are cotorsion pairs, the containment $\mathsf{Proj}(\mathcal{C})\subseteq \mathcal{F}\cap\mathcal{G}$ always holds. Conversely, let $H\in \mathcal{F}\cap\mathcal{G}$ and $C\in \mathcal{C}$ be an arbitrary object. Let us consider an element in $\Ext^1_{\mathcal{C}}(H,C)$ represented by an exact sequence
\begin{equation}\label{eq1}
0\to C\to D\to H\to 0.
\end{equation}
Since $H\in \mathcal G$, the sequence \eqref{eq1} is $\Hom_{\mathcal{C}}(-,\mathcal{L})$-acyclic. But then by Lemma \ref{lem 3}, we have that this sequence is also $\Hom_{\mathcal{C}}(\mathcal{F},-)$-acyclic. This in turn implies that \eqref{eq1} splits, since $H\in \mathcal{F}$. Finally, being $C$ arbitrary, we conclude that $H$ is projective.
\epf


\subsection*{Uniqueness of balanced pairs}

Given a preenveloping class $\mathcal L$ in $\mathcal{C}$, there might be two different classes $\mathcal F_1$ and $\mathcal F_2$ such that $(\mathcal{F}_1,\mathcal L)$ and $(\mathcal{F}_2,\mathcal L)$ are balanced pairs. For instance, take the category $\mathcal{C} = \mathsf{Mod}(R)$ of left $R$-modules and $\mathcal{L}$ the class of all injective left $R$-modules. Then, we have two balanced pairs $(\mathcal{F}_1,\mathcal{L})$ and $(\mathcal{F}_2,\mathcal{L})$, where $\mathcal{F}_1$ is the class of all free left $R$-modules and $\mathcal F_2$ consists of all projective left $R$-modules. In this example we notice that $\mathsf{Smd}(\mathcal{F}_1)=\mathsf{Smd}(\mathcal{F}_2)$ (where the notation $\mathsf{Smd}(\mathcal{F})$ stands for the class of direct summands of objects in $\mathcal F$). The second consequence of Lemma~\ref{lem 3} shows that this sort of uniqueness property holds for any admissible balanced pair.

\begin{prop}
If $(\mathcal F_1,\mathcal L)$ and $(\mathcal F_2,\mathcal L)$ are two admissible balanced pairs in $\mathcal{C}$, then the equality $\mathsf{Smd}(\mathcal{F}_1)=\mathsf{Smd}(\mathcal{F}_2)$ holds. Dually, if $(\mathcal{F},\mathcal{L}_1)$ and $(\mathcal{F},\mathcal{L}_2)$ are two admissible balanced pairs in $\mathcal{C}$, then $\mathsf{Smd}(\mathcal{L}_1) = \mathsf{Smd}(\mathcal{L}_2)$.
\end{prop}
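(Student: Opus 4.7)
By symmetry it suffices to prove $\mathcal{F}_1 \subseteq \mathsf{Smd}(\mathcal{F}_2)$; the reverse containment follows by swapping the roles of $\mathcal{F}_1$ and $\mathcal{F}_2$, and the statement concerning $\mathcal{L}_1$ and $\mathcal{L}_2$ is entirely dual. The key idea is to exploit the fact that the two balanced pairs share the class $\mathcal{L}$, so that $\Hom_{\mathcal{C}}(-,\mathcal{L})$-acyclicity acts as a bridge between them via the equivalence (a) $\Leftrightarrow$ (b) of Lemma~\ref{lem 3}.

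Fix $F_1\in\mathcal{F}_1$. Since $(\mathcal{F}_2,\mathcal{L})$ is admissible balanced, there is an $\mathcal{F}_2$-precover $\phi\mathcolon F_2\to F_1$ which, by admissibility, is an epimorphism. Let $K:=\ker(\phi)$ and consider the short exact sequence
\[
0\to K\to F_2\xrightarrow{\phi} F_1\to 0.
\]
The defining property of an $\mathcal{F}_2$-precover says exactly that, for every $F'\in\mathcal{F}_2$, the map $\Hom_{\mathcal{C}}(F',F_2)\to\Hom_{\mathcal{C}}(F',F_1)$ is surjective, so the above sequence is $\Hom_{\mathcal{C}}(\mathcal{F}_2,-)$-acyclic. (This is the reason admissibility is needed: it upgrades the precover to an actual short exact sequence to which Lemma~\ref{lem 3} can be applied.)

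Now I would apply Lemma~\ref{lem 3} twice. First, invoking (a) $\Rightarrow$ (b) for the balanced pair $(\mathcal{F}_2,\mathcal{L})$ on this $\Hom_{\mathcal{C}}(\mathcal{F}_2,-)$-acyclic left exact sequence yields that it is also $\Hom_{\mathcal{C}}(-,\mathcal{L})$-acyclic. Second, since the same sequence is right exact, applying (b) again, this time for the balanced pair $(\mathcal{F}_1,\mathcal{L})$, shows that it is $\Hom_{\mathcal{C}}(\mathcal{F}_1,-)$-acyclic. In particular the functor $\Hom_{\mathcal{C}}(F_1,-)$ turns it into an exact sequence, so $\mathrm{id}_{F_1}$ lifts along $\phi$; hence $\phi$ is a split epimorphism and $F_1\in\mathsf{Smd}(\mathcal{F}_2)$.

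I do not anticipate a real obstacle here: once one observes that admissibility converts precovers/preenvelopes into short exact sequences, the double application of the criterion in Lemma~\ref{lem 3} essentially writes itself. The only thing requiring minor care is bookkeeping of the direction of each invocation of the lemma (one uses the balance of $(\mathcal{F}_2,\mathcal{L})$ to produce $\Hom_{\mathcal{C}}(-,\mathcal{L})$-acyclicity, then the balance of $(\mathcal{F}_1,\mathcal{L})$ to extract $\Hom_{\mathcal{C}}(\mathcal{F}_1,-)$-acyclicity), and the dual half is written by replacing precovers by preenvelopes and monomorphisms for epimorphisms throughout.
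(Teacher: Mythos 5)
Your proof is correct and follows essentially the same argument as the paper: take an $\mathcal{F}_2$-precover of $F_1\in\mathcal{F}_1$, use admissibility to turn it into a short exact sequence that is $\Hom_{\mathcal{C}}(\mathcal{F}_2,-)$-acyclic, apply Lemma~\ref{lem 3} for $(\mathcal{F}_2,\mathcal{L})$ to get $\Hom_{\mathcal{C}}(-,\mathcal{L})$-acyclicity and then for $(\mathcal{F}_1,\mathcal{L})$ to get $\Hom_{\mathcal{C}}(\mathcal{F}_1,-)$-acyclicity, and conclude that the sequence splits. The bookkeeping of the two invocations of the lemma matches the paper exactly, so there is nothing to add.
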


\pf
Let us see that $\mathsf{Smd}(\mathcal F_1)\subseteq \mathsf{Smd}(\mathcal F_2)$. The other inclusion follows by the same argument. It is easy to observe that it suffices to show $\mathcal{F}_1\subseteq \mathsf{Smd}(\mathcal F_2)$. First, note that since $\mathcal F_2$ is a precovering class in $\mathcal{C}$, for any $F_1\in \mathcal F_1$ we have a $\Hom_{\mathcal C}(\mathcal{F}_2, -)$-acyclic left exact sequence
\begin{equation}\label{equat}
0 \to K \to F_2 \to F_1 \to 0.
\end{equation}
in $\mathcal{C}$ with $F_2 \in \mathcal{F}_2$. In fact, since $(\mathcal F_2,\mathcal L)$ is admissible, the sequence (\ref{equat}) is exact. By Lemma \ref{lem 3} along with the fact that $(\mathcal F_2,\mathcal L)$ is balanced, the sequence \eqref{equat} is also $\Hom_{\mathcal C}(-,\mathcal L)$-acyclic. But then, using now that $(\mathcal F_1,\mathcal L)$ is balanced, \eqref{equat} is also $\Hom_{\mathcal C}(\mathcal F_1,-) $-acyclic. This implies that \eqref{equat} splits, since $F_1 \in \mathcal{F}_1$. Hence, $F_1\in \mathsf{Smd}(\mathcal F_2)$, which completes the proof.
 \epf


\section{Relation between balanced pairs and cotorsion triplets}

It is not in general an easy task to check whether or not a pair of classes $(\mathcal F,\mathcal L)$ form a balanced pair in an abelian category. A common source to provide with such pairs is by means of \emph{cotorsion triplets}. This section is thus devoted to define such triplets and to explore their relation with balanced pairs. In summary, every complete and hereditary cotorsion triplet gives rise to a balanced pair. Cotorsion triplets were introduced by A. Beligiannis and I. Reiten in \cite[Section~3 of Chapter~VI.]{BR}, where they study necessary and sufficient conditions for the existence of such triplets. The concept is also studied by Enochs and Jenda in \cite[Section~4.2]{EJ2} in the context of chain complexes of modules over an associative ring with identity.

\begin{df}\label{df:triplets}
Three classes $\mathcal{F}$, $\mathcal{G}$ and $\mathcal{L}$ of objects in $\mathcal{C}$ form a \textbf{cotorsion triplet} $(\mathcal{F,G,L})$ if $(\mathcal{F,G})$ and $(\mathcal{G,L})$ are cotorsion pairs in $\mathcal{C}$. Moreover, a cotorsion triplet $(\mathcal{F,G,L})$ in $\mathcal{C}$ is:
\begin{enumerate}
\item \textbf{Complete} if $(\mathcal{F,G})$ and $(\mathcal{G,L})$ are complete cotorsion pairs.

\item \textbf{Hereditary} if $(\mathcal{F,G})$ and $(\mathcal{G,L})$ are hereditary cotorsion pairs.
\end{enumerate}
\end{df}


\subsection*{From cotorsion triplets to balanced pairs}

The relation between cotorsion triplets and balanced pairs is summarized in the next proposition. It was originally outlined by Enochs, Jenda and Torrecillas in \cite[Theorem 4.1]{EJTX}, but the precise formulation we state below is due to Chen \cite[Proposition 2.6]{Chen}.

\begin{prop}\label{prop.relation.balance.triplets}
If $(\mathcal{F,G,L})$ is a complete hereditary cotorsion triplet in $\mathcal{C}$, then $(\mathcal{F,L})$ is an admissible balanced pair in $\mathcal{C}$.
\end{prop}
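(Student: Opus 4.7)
I would verify the balanced pair property via Lemma~\ref{lem 3}(c): for each $M\in\mathcal{C}$ it suffices to produce a short exact sequence $0 \to K \to F \to M \to 0$ with $F \in \mathcal{F}$ and a short exact sequence $0 \to M \to L \to C \to 0$ with $L \in \mathcal{L}$, both simultaneously $\Hom_\mathcal{C}(\mathcal{F},-)$-acyclic and $\Hom_\mathcal{C}(-,\mathcal{L})$-acyclic. Admissibility is automatic, since special $\mathcal{F}$-precovers arising from a complete cotorsion pair are epimorphisms, and dually for special $\mathcal{L}$-preenvelopes.

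The two sequences come directly from the completeness of the two cotorsion pairs constituting the triplet. From $(\mathcal{F},\mathcal{G})$ I extract a special $\mathcal{F}$-precover $0 \to G \to F \to M \to 0$ with $F \in \mathcal{F}$ and $G \in \mathcal{G}$; from $(\mathcal{G},\mathcal{L})$ I extract a special $\mathcal{L}$-preenvelope $0 \to M \to L \to G' \to 0$ with $L \in \mathcal{L}$ and $G' \in \mathcal{G}$. Two of the four required acyclicities are immediate from the hereditary hypothesis: since $G \in \mathcal{G} = \mathcal{F}^\perp$, hereditariness of $(\mathcal{F},\mathcal{G})$ yields $\Ext_\mathcal{C}^i(\mathcal{F},G) = 0$ for all $i \geq 1$, so applying $\Hom_\mathcal{C}(F',-)$ for $F' \in \mathcal{F}$ keeps the first sequence exact; symmetrically, $G' \in {}^\perp\mathcal{L}$ makes the second sequence $\Hom_\mathcal{C}(-,\mathcal{L})$-acyclic by hereditariness of $(\mathcal{G},\mathcal{L})$.

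The substantive step is establishing the two \emph{cross} acyclicities: that the first sequence is also $\Hom_\mathcal{C}(-,\mathcal{L})$-acyclic and that the second is also $\Hom_\mathcal{C}(\mathcal{F},-)$-acyclic. My plan is to promote each single step into a full iterated resolution: iterating the special $\mathcal{F}$-precover produces an exact $\mathcal{F}$-resolution $\cdots \to F_1 \to F_0 \to M \to 0$ whose every syzygy $G_i = \ker(F_i \to F_{i-1})$ lies in $\mathcal{G}$, and a dual construction gives an exact $\mathcal{L}$-coresolution with all cosyzygies in $\mathcal{G}$. Applying $\Hom_\mathcal{C}(-,L')$ for $L' \in \mathcal{L}$ and chasing the long exact $\Ext$ sequences associated with each short exact piece $0 \to G_i \to F_i \to G_{i-1} \to 0$, the key input $\Ext_\mathcal{C}^i(G_i, L') = 0$ for $i \geq 1$ (hereditariness of $(\mathcal{G},\mathcal{L})$), combined with dimension-shifting, collapses the cohomology of the applied complex, and the dual argument handles the coresolution side. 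The main obstacle is precisely this dimension-shift bookkeeping through the tower of long exact sequences, but once it is in place the balanced-pair conclusion, together with admissibility, follows formally.
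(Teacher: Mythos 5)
Your plan correctly reduces the statement to Lemma~\ref{lem 3}(c), correctly produces the two short exact sequences from completeness, and correctly observes that two of the four required acyclicities are immediate from $G\in\mathcal{F}^\perp$ and $G'\in{}^\perp\mathcal{L}$. It also differs in spirit from what the paper does: the paper does not prove this proposition from scratch, but attributes it to Chen's \cite[Proposition~2.6]{Chen} and then points out (in the remark that follows) that the ``enough projectives and injectives'' hypothesis Chen imposes is automatic, because Theorem~\ref{theorem.cotorsion.triplets} shows it is forced by the mere existence of a complete hereditary cotorsion triplet. So you are giving a self-contained argument where the paper cites.

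However, there is a real gap at exactly the step you call ``the main obstacle.'' Iterating the special $\mathcal{F}$-precover gives an exact resolution $\cdots\to F_1\to F_0\to M\to 0$ with syzygies $G_i\in\mathcal{G}$. Apply $\Hom_\mathcal{C}(-,L')$ for $L'\in\mathcal{L}$. Breaking the resolution into the pieces $0\to G_i\to F_i\to G_{i-1}\to 0$ (with $G_{-1}=M$), the vanishing $\Ext^1_\mathcal{C}(G_{i-1},L')=0$ is what makes the restriction $\Hom(F_i,L')\to\Hom(G_i,L')$ surjective, and this is what kills $H^n$ of the applied complex. But this needs $G_{i-1}\in\mathcal{G}$, which holds for $i\geq 1$ and fails for $i=0$: there $G_{-1}=M$ is arbitrary, so $\Ext^1_\mathcal{C}(M,L')$ does not vanish, and the long exact sequence only yields
\[
\Hom_\mathcal{C}(F_0,L')\to\Hom_\mathcal{C}(G_0,L')\to\Ext^1_\mathcal{C}(M,L')\to\Ext^1_\mathcal{C}(F_0,L')\to 0,
\]
which leaves $H^1\cong\ker\bigl(\Ext^1_\mathcal{C}(M,L')\to\Ext^1_\mathcal{C}(F_0,L')\bigr)$ unaccounted for. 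In other words, the substantive claim hiding in your outline is that the single step $0\to G_0\to F_0\to M\to 0$ is already $\Hom_\mathcal{C}(-,\mathcal{L})$-acyclic, i.e.\ that $\Hom_\mathcal{C}(F_0,L')\to\Hom_\mathcal{C}(G_0,L')$ is surjective. This is exactly the nontrivial content of the proposition (one cannot simply quote $\Ext^1_\mathcal{C}(\mathcal{F},\mathcal{L})=0$, which is false already over a non-semisimple quasi-Frobenius ring), and ``dimension-shifting'' as you describe it does not reach degree~1. You need a genuine argument here — for instance the kind of pushout/pullback comparison with a special $\mathcal{L}$-preenvelope that Chen carries out — before the rest of the tower collapses as you say. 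The same gap appears, dually, on the coresolution side.
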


\begin{remark}
Chen's original statement and proof in \cite[Proposition 2.6]{Chen} requires that $\mathcal{C}$ has enough projectives and injectives. However, these hypotheses are actually not necessary. This fact has to do with an interesting characterization of abelian categories with enough projectives and injectives in terms of complete hereditary cotorsion triplets, presented in Theorem~\ref{theorem.cotorsion.triplets} below.

In particular, this result shows that it is hopeless to look for complete hereditary cotorsion triplets in Grothendieck categories without enough projectives, such as some interesting categories studied in Algebraic Geometry. For example, if $T$ is a non-trivial topological space and $\mathcal{O}$ is a sheaf of commutative rings with 1 on $T$, then $\textbf{Sh}(\mathcal{O})$, the category of sheaves of $\mathcal{O}$-modules, does not have enough projective $\mathcal{O}$-modules. This is also the case of the category $\Qcoh(X)$ of quasi-coherent sheaves on a non-affine scheme $X$, considered in Section~\ref{sec:flat_balance}. Thus, it will follow that neither $\textbf{Sh}(\mathcal{O})$ nor $\Qcoh(X)$ have complete and hereditary cotorsion triplets.
\end{remark}

\begin{thm}\label{theorem.cotorsion.triplets}
The following conditions are equivalent.
\begin{itemize}
\item[(a)] $\mathcal{C}$ has enough projectives and injectives.

\item[(b)] There exists a complete hereditary cotorsion triplet $(\mathcal{F,G,L})$ in $\mathcal{C}$.
\end{itemize}
\end{thm}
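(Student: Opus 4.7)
My plan is to dispose of (a) $\Rightarrow$ (b) by exhibiting the trivial triplet $(\mathsf{Proj}(\mathcal{C}),\mathcal{C},\mathsf{Inj}(\mathcal{C}))$: both $(\mathsf{Proj}(\mathcal{C}),\mathcal{C})$ and $(\mathcal{C},\mathsf{Inj}(\mathcal{C}))$ are plainly hereditary cotorsion pairs, and completeness is an immediate consequence of having enough projectives and injectives. The real content is (b) $\Rightarrow$ (a), and the key observation I would build on is the identification $\mathcal{F} \cap \mathcal{G} = \mathsf{Proj}(\mathcal{C})$ (and, dually, $\mathcal{G} \cap \mathcal{L} = \mathsf{Inj}(\mathcal{C})$).

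For the inclusion $\mathcal{F} \cap \mathcal{G} \subseteq \mathsf{Proj}(\mathcal{C})$, pick $X \in \mathcal{F} \cap \mathcal{G}$ and an arbitrary $C \in \mathcal{C}$. Completeness of the cotorsion pair $(\mathcal{G},\mathcal{L})$ yields a short exact sequence
\[
0 \to L \to G \to C \to 0
\]
with $G \in \mathcal{G}$ and $L \in \mathcal{L}$. In the associated long exact sequence obtained from $\Hom_{\mathcal{C}}(X,-)$, the group $\Ext^1_{\mathcal{C}}(X,C)$ is sandwiched between $\Ext^1_{\mathcal{C}}(X,G)$ and $\Ext^2_{\mathcal{C}}(X,L)$. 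The former vanishes because $X \in \mathcal{F} = {}^\perp\mathcal{G}$, and the latter vanishes because $X \in \mathcal{G} = {}^\perp\mathcal{L}$ combined with the fact that the pair $(\mathcal{G},\mathcal{L})$ is hereditary (so higher $\Ext$ groups against $\mathcal{L}$ vanish as well). Hence $X$ is projective. The reverse inclusion is automatic, since every projective object lies in ${}^\perp\mathcal{G}$ and in ${}^\perp\mathcal{L}$.

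Now for enough projectives: given $C \in \mathcal{C}$, first use completeness of $(\mathcal{G},\mathcal{L})$ to obtain $0 \to L \to G \to C \to 0$ with $G \in \mathcal{G}$, and then apply completeness of $(\mathcal{F},\mathcal{G})$ to $G$ to produce $0 \to G' \to F \to G \to 0$ with $F \in \mathcal{F}$ and $G' \in \mathcal{G}$. The pivotal point is that $\mathcal{G} = {}^\perp\mathcal{L}$ is closed under extensions (any left-orthogonal class is), so having both $G$ and $G'$ in $\mathcal{G}$ forces the middle term $F$ to lie in $\mathcal{G}$ as well. Therefore $F \in \mathcal{F} \cap \mathcal{G} = \mathsf{Proj}(\mathcal{C})$, and the composite epimorphism $F \twoheadrightarrow G \twoheadrightarrow C$ exhibits a projective cover of $C$. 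Enough injectives follow by the dual two-step construction. The subtle point I anticipate is the order of the two special sequences: if one tried the other order—an $\mathcal{F}$-precover of $C$ followed by a $\mathcal{G}$-precover of the resulting $F$-term—the extension-closure argument would require $\mathcal{L} \subseteq \mathcal{F}$, which is false in general. This asymmetry explains why the triplet structure (not just the presence of either cotorsion pair alone) is what produces the projective.
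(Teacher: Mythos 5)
Your proof is correct. For (a)~$\Rightarrow$~(b) you use the trivial triplet $(\mathsf{Proj}(\mathcal{C}),\mathcal{C},\mathsf{Inj}(\mathcal{C}))$, same as the paper. For (b)~$\Rightarrow$~(a), your overall strategy matches the paper's---two applications of completeness, extension-closure of $\mathcal{G}$ to land $F$ in $\mathcal{F}\cap\mathcal{G}$, and the identification $\mathcal{F}\cap\mathcal{G}=\mathsf{Proj}(\mathcal{C})$---but you organize it more economically. The paper forms the pullback of $L\to G\leftarrow F$ to obtain $0\to K\to F\to C\to 0$ with $K\in(\mathcal{F}\cap\mathcal{G})^{\perp}$, and only then runs the long exact sequence against this derived sequence to conclude $\mathcal{F}\cap\mathcal{G}\subseteq\mathsf{Proj}(\mathcal{C})$. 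You bypass the pullback entirely: you prove $\mathcal{F}\cap\mathcal{G}\subseteq\mathsf{Proj}(\mathcal{C})$ up front by applying $\Hom_{\mathcal{C}}(X,-)$ directly to $0\to L\to G\to C\to 0$ and sandwiching $\Ext^1_{\mathcal{C}}(X,C)$ between $\Ext^1_{\mathcal{C}}(X,G)=0$ (since $X\in\mathcal{F}$) and $\Ext^2_{\mathcal{C}}(X,L)=0$ (since $X\in\mathcal{G}$ and $(\mathcal{G},\mathcal{L})$ is hereditary), and then for ``enough projectives'' you merely compose the two epimorphisms $F\twoheadrightarrow G\twoheadrightarrow C$. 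This is a genuine simplification: the pullback and the class $(\mathcal{F}\cap\mathcal{G})^{\perp}$ never need to appear. Your remark on why the order of the two completeness steps is forced (the alternative order would require $\mathcal{L}\subseteq\mathcal{F}$) is an accurate observation that the paper does not make. One terminological nit: $F\twoheadrightarrow C$ with $F$ projective exhibits an epimorphism from a projective, not a ``projective cover'' (that term is reserved for a minimal such epimorphism); the logic is unaffected.
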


\begin{proof}
For the implication (a) $\Rightarrow$ (b) it suffices to consider the complete hereditary cotorsion triplet $(\mathsf{Proj}(\mathcal{C}),\mathcal{C},\mathsf{Inj}(\mathcal{C}))$.

Let us now prove (b) $\Rightarrow$ (a). So suppose we are given a complete hereditary cotorsion triplet $(\mathcal{F,G,L})$ in $\mathcal{C}$. For any object $C \in \mathcal{C}$, we have a short exact sequence
\[
0 \to L \to G \to C \to 0
\]
in $\mathcal{C}$ with $G \in \mathcal{G}$ and $L \in \mathcal{L}$, since $(\mathcal{G,L})$ is a complete cotorsion pair. Now using the completeness of $(\mathcal{F,G})$, we have a short exact sequence
\[
0 \to G' \to F \to G \to 0
\]
with $F \in \mathcal{F}$ and $G' \in \mathcal{G}$. Note that $F$ actually belongs to $\mathcal{F} \cap \mathcal{G}$ since $\mathcal{G}$ is closed under extensions. Now taking the pullback of $L \rightarrow G \leftarrow F$, we obtain two short exact sequences of the form:
\begin{align}
0 & \to G' \to K \to L \to 0 \label{eqn:sec1} \\
0 & \to K \to F \to C \to 0 \label{eqn:sec2}
\end{align}
Note  that $G', L \in (\mathcal{F} \cap \mathcal{G})^{\perp}$ in \eqref{eqn:sec1}, and so $K \in (\mathcal{F} \cap \mathcal{G})^{\perp}$. The proof will conclude after we show that $\mathcal{F} \cap \mathcal{G} = \mathsf{Proj}(\mathcal{C})$. The containment $(\supseteq)$ is clear. Now let $W \in \mathcal{F} \cap \mathcal{G}$. From \eqref{eqn:sec2} we have the long homology exact sequence
\[
\cdots \to \Ext^i_{\mathcal{C}}(W,F) \to \Ext^i_{\mathcal{C}}(W,C) \to \Ext^{i+1}_{\mathcal{C}}(W,K) \to \cdots.
\]
On the one hand, $\Ext^i_{\mathcal{C}}(W,F) = 0$ for every $i > 0$ since $W \in \mathcal{F}$ and $F \in \mathcal{G}$, and $(\mathcal{F,G})$ is a hereditary cotorsion pair. On the other hand, $\Ext^{i+1}_{\mathcal{C}}(W,K) = 0$ for every $i > 0$ since $W \in \mathcal{F} \cap \mathcal{F}$ and $K \in (\mathcal{F} \cap \mathcal{F})^\perp$. It follows that $\Ext^i_{\mathcal{C}}(W,C) = 0$ for every positive integer $i > 0$. Since the object $C \in \mathcal{C}$ is arbitrary, we have that $W \in \mathsf{Proj}(\mathcal{C})$.

A dual argument shows that $\mathcal{C}$ has also enough injectives.
\end{proof}

From now on, unless otherwise specified, $R$ will be an associative ring with identity, and all modules are left $R$-modules.

\begin{exa}\label{ex:triplets}
We collect from the literature the following examples of complete hereditary cotorsion triplets (and hence of admissible balanced pairs):
\begin{enumerate}
\item Let $\mathcal{C}$ be an abelian category. We already know from the proof of Theorem~\ref{theorem.cotorsion.triplets} that $(\mathsf{Proj}(\mathcal{C}),\mathcal{C},\mathsf{Inj}(\mathcal{C}))$ is a complete cotorsion triplet if, and only if, $\mathcal{C}$ has enough projectives and injectives. If any of these two conditions holds, we have the well known balanced pair $(\mathsf{Proj}(\mathcal{C}),\mathsf{Inj}(\mathcal{C}))$. Not all of the complete hereditary cotorsion triplets in $\mathcal{C}$ have to be of the form $(\mathsf{Proj}(\mathcal{C}),\mathcal{C},\mathsf{Inj}(\mathcal{C}))$, as shown in the rest of the examples.

\item \label{ex:quasiFrobenius} Consider the category $\mathsf{Mod}(R)$ of modules. In this case, let us set $\mathsf{Proj}(\mathsf{Mod}(R)) = \mathsf{Proj}(R)$ and $\mathsf{Inj}(\mathsf{Mod}(R)) = \mathsf{Inj}(R)$, for simplicity. Recall that a ring $R$ is quasi-Frobenius if $\mathsf{Proj}(R) = \mathsf{Inj}(R)$. We can note that $R$ is quasi-Frobenius if, and only if, the triplet $(\mathsf{Mod}(R),\mathsf{Proj}(R),\mathsf{Mod}(R))$ is a complete cotorsion triplet.

\item Beligiannis and Reiten \cite[Section~3 of Chapter~VI.]{BR}: Let $\Lambda$ be an Artin algebra and $\mathsf{mod}(\Lambda)$ denote the abelian category of finitely generated left $\Lambda$-modules. Let $\mathsf{add}(\Lambda)$ denote the class of objects in $\mathsf{mod}(\Lambda)$ that are direct summands of finite direct sums of copies of $\Lambda$. The class $\mathsf{CM}(\Lambda)$ of \emph{maximal Cohen-Macaulay modules over $\Lambda$} is defined as those $M \in \mathsf{mod}(\Lambda)$ such that there exists an exact sequence
\[
0 \to M \to W^0 \xrightarrow{f^0} W^1 \xrightarrow{f^1} W^2 \to \cdots
\]
with $W^k \in \mathsf{add}(\Lambda)$ and ${\rm Ker}(f^k) \in {}^\perp(\mathsf{add}(\Lambda))$ for every $k \geq 0$. The class $\mathsf{CoCM}(\Lambda)$ is defined dually. On the other hand, let $\mathsf{proj}_{\infty}(\Lambda)$ (respectively $\mathsf{inj}_{\infty}(\Lambda)$) denote the class of finitely generated $\Lambda$-modules with finite projective (respectively injective) dimension. If $\Lambda$ is Gorenstein, then $({\rm CM}(\Lambda),\mathsf{proj}_{\infty}(\Lambda),{\rm CoCM}(D(\Lambda)))$ is a complete hereditary cotorsion triplet in $\mathsf{mod}(\Lambda)$, where $D(\Lambda)$ is the minimal injective cogenerator of $\mathsf{mod}(\Lambda)$. In this case, one has $\mathsf{proj}_{\infty}(\Lambda) = \mathsf{inj}_{\infty}(\Lambda)$.

\item Enochs and Jenda \cite[Proposition 4.4.5]{EJ2}: Let $\mathsf{Ch}(R)$ denote the category of chain complexes of modules. Recall from \cite[Definition 4.2.2]{EJ2} that a chain complex $P = (P_m,\partial^P_m)_{m \in \mathbb{Z}}$ is \emph{perfect} if $P_m = 0$ except for a finite number of integers $m \in \mathbb{Z}$ and if each $P_m$ is a finitely generated projective module. If $\mathcal{S}$ is a set of perfect complexes and $\mathcal{U}$ is the set of all complexes $\Sigma^k(P)$ where $P \in \mathcal{S}$ and $k \in \mathbb{Z}$, then there exists a unique complete hereditary cotorsion triplet $(\mathcal{Y,X,Z})$ in $\mathsf{Ch}(R)$ where $\mathcal{X} = \mathcal{U}^\perp$. Here, $\Sigma^k(P)$ denotes the \emph{$k$-th suspension} of $P$, that is, $\Sigma^k(P)_m := P_{m-k}$ for every integer $m \in \mathbb{Z}$, with boundaries given by $(-1)^k \partial^{\bm{P}}_{m-k}$.
\end{enumerate}
The following examples seem to show that every type of relative homological algebra has its own associated balanced pair.
\begin{enumerate}
\setcounter{enumi}{4}
\item \cite[Section~4.3 of Chapter IV.]{EJ2}: Let $\mathcal{E}$ denote the class of exact chain complexes in $\mathsf{Ch}(R)$. Then, $({}^{\perp_1}\mathcal{E},\mathcal{E},\mathcal{E}^{\perp_1})$ is a complete hereditary cotorsion triplet in $\mathsf{Ch}(R)$, known as the \emph{Dold triplet}. Here, ${}^{\perp_1}\mathcal{E}$ coincides with the class $\mathsf{dg}(\mathsf{Proj}(R))$ of DG-projective complexes in $\mathsf{Ch}(R)$, defined as those complexes $P$ in $\mathsf{Ch}(R)$ such that $P_m$ is a projective module for every integer $m \in \mathbb{Z}$, and every chain map $P \to E$ is homotopic to zero whenever $E \in \mathcal{E}$. Dually, $\mathcal{E}^{\perp_1}$ coincides with the class $\mathsf{dg}(\mathsf{Inj}(R))$ of DG-injective complexes. Here, we have the balanced pair $(\mathsf{dg}(\mathsf{Proj}(R)),\mathsf{dg}(\mathsf{Inj}(R)))$.

\item \label{ex:GorensteinTriplet} Hovey \cite[Section~8]{Hovey}: Let $\mathsf{GProj}(R)$ and $\mathsf{GInj}(R)$ denote the classes of Gorenstein projective and Gorenstein injective modules. Let $\mathsf{Proj}_{\infty}(R)$ (respectively $\mathsf{Inj}_{\infty}(R)$) denote the class of modules with finite projective (respectively injective) dimension. If $R$ is an Iwanaga-Gorenstein ring, then $(\mathsf{GProj}(R), \mathsf{Proj}_{\infty}(R), \mathsf{GInj}(R))$ is a complete hereditary cotorsion triplet in $\mathsf{Mod}(R)$, where $\mathsf{Proj}_{\infty}(R) = \mathsf{Inj}_{\infty}(R)$ by \cite[Proposition~9.1.7]{EJ}. Here, we have the balanced pair $(\mathsf{GProj}(R),\mathsf{GInj}(R))$ comprising several properties in Gorenstein homological algebra.

\item Gillespie \cite{Gi}: Similar to \eqref{ex:GorensteinTriplet} above, let $\mathsf{DProj}(R)$ and $\mathsf{DInj}(R)$ denote the classes of Ding-projective and Ding-injective modules, respectively. Let $\mathsf{Flat}_{\infty}(R)$ (respectively ${\rm FP}\mbox{-}\mathsf{Inj}_{\infty}(R)$) denote the class of modules with finite flat (respectively FP-injective) dimension. If $R$ is a Ding-Chen ring, then $(\mathsf{DProj}(R),\mathsf{Flat}_{\infty}(R),\mathsf{DInj}(R))$ is a complete hereditary cotorsion triplet in $\mathsf{Mod}(R)$, where $\mathsf{Flat}_{\infty}(R) = {\rm FP}\mbox{-}\mathsf{Inj}_{\infty}(R)$ by \cite[Proposition 3.16]{DC}. In this case, we have the balanced pair $(\mathsf{DProj}(R),\mathsf{DInj}(R))$ for Ding-Chen homological algebra.
\end{enumerate}
\end{exa}


\subsection*{From balanced pairs to cotorsion triplets}

In \cite[Open Problems]{EJTX} is asked under what conditions a converse of Proposition \ref{prop.relation.balance.triplets} holds. Namely, giving a special precovering class $\mathcal{F}$ and a special preenveloping class $\mathcal{L}$ in $\mathcal{C}$ such that the pair $(\mathcal{F,L})$ is balanced, under what conditions is it true that we have a complete cotorsion triplet $(\mathcal{F,G,L})$?. In the next proposition, we give sufficient conditions on such $\mathcal F$ and $\mathcal L$ to ensure that they are the extremes of a complete hereditary cotorsion triplet.

\begin{prop}\label{prop:from_balance_to_triplets}
Let $\mathcal{C}$ be an abelian category with enough projectives and injectives. Let $\mathcal F$ and $\mathcal L$ be two classes of objects in $\mathcal C$ closed under direct summands such that:
\begin{enumerate}
\item The class $\mathcal F$ is resolving and special precovering, and the class $\mathcal L$ is coresolving and special preenveloping.

\item $\mathcal F\cap \mathcal F^\perp\subseteq {^{\perp}\mathcal L}$ and $^{\perp}\mathcal L\cap \mathcal L\subseteq \mathcal F^\perp$.

\item The pair $(\mathcal F,\mathcal L)$ is balanced.
\end{enumerate}
Then, there is a complete hereditary complete cotorsion triplet $(\mathcal{F,G,L})$ in $\mathcal{C}$. In this case, we have $\mathcal{F} \cap \mathcal{F}^\perp = \mathsf{Proj}(\mathcal{C})$ and ${}^\perp\mathcal{L} \cap \mathcal{L} = \mathsf{Inj}(\mathcal{C})$.
\end{prop}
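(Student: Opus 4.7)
The plan is to apply Lemma~\ref{lema.complete.hered.sprecovering} to produce two complete hereditary cotorsion pairs $(\mathcal F, \mathcal F^\perp)$ and $({}^\perp \mathcal L, \mathcal L)$, and then to prove that their middle classes coincide, $\mathcal F^\perp = {}^\perp \mathcal L$. Setting $\mathcal G$ to be this common class will give the desired complete hereditary cotorsion triplet $(\mathcal F, \mathcal G, \mathcal L)$.

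First, I would observe that hypothesis (1) combined with the stipulated closure of $\mathcal F$ and $\mathcal L$ under direct summands places us in the setting of Lemma~\ref{lema.complete.hered.sprecovering}, which yields both cotorsion pairs as complete and hereditary. For the equality $\mathcal F^\perp = {}^\perp \mathcal L$, I would argue one inclusion and invoke duality for the other. Given $X \in \mathcal F^\perp$, the plan is to use the completeness of $(\mathcal F, \mathcal F^\perp)$ to pick a short exact sequence $0 \to Y \to F \to X \to 0$ with $F \in \mathcal F$ and $Y \in \mathcal F^\perp$. The long exact $\Ext^i_{\mathcal C}(F', -)$-sequence for $F' \in \mathcal F$ will force $F \in \mathcal F^\perp$ as well, so $F \in \mathcal F \cap \mathcal F^\perp \subseteq {}^\perp \mathcal L$ by (2), giving $\Ext^1_{\mathcal C}(F, L) = 0$ for every $L \in \mathcal L$. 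Because $Y \in \mathcal F^\perp$, the same sequence is $\Hom_{\mathcal C}(\mathcal F, -)$-acyclic, so Lemma~\ref{lem 3} combined with the balance hypothesis (3) will render it $\Hom_{\mathcal C}(-, \mathcal L)$-acyclic; the tail of the resulting $\Hom_{\mathcal C}(-, L)$ long exact sequence then embeds $\Ext^1_{\mathcal C}(X, L)$ into $\Ext^1_{\mathcal C}(F, L) = 0$, giving $X \in {}^\perp \mathcal L$. The opposite inclusion is exactly dual: starting from a preenvelope $0 \to Y \to L \to Y' \to 0$ with $Y, Y' \in {}^\perp \mathcal L$ and $L \in \mathcal L$ coming from $({}^\perp \mathcal L, \mathcal L)$, one gets $L \in {}^\perp \mathcal L \cap \mathcal L \subseteq \mathcal F^\perp$ by (2), and uses balance to convert $\Hom_{\mathcal C}(-, \mathcal L)$-acyclicity into $\Hom_{\mathcal C}(\mathcal F, -)$-acyclicity, forcing $\Ext^1_{\mathcal C}(F, Y) = 0$ for every $F \in \mathcal F$.

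Finally, the identifications $\mathcal F \cap \mathcal F^\perp = \mathsf{Proj}(\mathcal C)$ and ${}^\perp \mathcal L \cap \mathcal L = \mathsf{Inj}(\mathcal C)$ will follow immediately from the unnamed Proposition in the ``Balanced pairs vs.\ cotorsion pairs'' subsection, applied to the cotorsion pairs $(\mathcal F, \mathcal G)$ and $(\mathcal G, \mathcal L)$ together with the balanced pair $(\mathcal F, \mathcal L)$. I expect the main obstacle to be the coincidence $\mathcal F^\perp = {}^\perp \mathcal L$ itself: the only lever available is hypothesis (2), which is a vanishing statement restricted to the intersections $\mathcal F \cap \mathcal F^\perp$ and ${}^\perp \mathcal L \cap \mathcal L$. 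The key insight is therefore to build, via completeness of the relevant cotorsion pair, a short exact sequence whose middle term sits precisely in one of these intersections, so that (2) becomes applicable and balance can convert the manifest $\Hom$-acyclicity into the one that forces the desired $\Ext^1$-vanishing.
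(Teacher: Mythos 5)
Your proposal is correct and follows the paper's own proof essentially step for step: both apply Lemma~\ref{lema.complete.hered.sprecovering} to obtain the two complete hereditary cotorsion pairs, then prove $\mathcal F^\perp = {}^\perp\mathcal L$ by taking a special $\mathcal F$-precover of an object of $\mathcal F^\perp$, noting its middle term lands in $\mathcal F\cap\mathcal F^\perp\subseteq{}^\perp\mathcal L$, and using Lemma~\ref{lem 3} together with hypothesis (3) to transfer $\Hom(\mathcal F,-)$-acyclicity to $\Hom(-,\mathcal L)$-acyclicity (and dually). Your explicit derivation of the final identifications $\mathcal F\cap\mathcal F^\perp=\mathsf{Proj}(\mathcal C)$ and ${}^\perp\mathcal L\cap\mathcal L=\mathsf{Inj}(\mathcal C)$ via the unnamed proposition in the ``Balanced pairs vs.\ cotorsion pairs'' subsection is also the intended route, which the paper leaves implicit.
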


\pf
Let us call $\mathcal{H}=\mathcal{F}^{\perp}$ and $\mathcal G={^{\perp}\mathcal L}$. With the hypothesis on $\mathcal F$ and $\mathcal L$ we get from Lemma \ref{lema.complete.hered.sprecovering} that $(\mathcal F,\mathcal H)$ and $(\mathcal G,\mathcal L)$ are complete hereditary cotorsion pairs in $\mathcal{C}$. Let us see that $\mathcal H=\mathcal G$. For any $H \in  \mathcal{H}$, we have a $\Hom_{\mathcal{C}}(\mathcal{F},-)$ exact sequence
\[
0\to H_0\to F\to H\to 0,
\]
with $F \in \mathcal{F}$ and $H_0\in\mathcal{H}$. It follows that $F\in \mathcal{F}\cap\mathcal{H}\subseteq \mathcal{G}$ by hypothesis. By Lemma \ref{lem 3}, the above sequence is also $\Hom_{\mathcal{C}}(-,\mathcal{L})$ exact, so we get $H\in \mathcal G $. So $\mathcal{H}\subseteq\mathcal{G}$. Dually, we also have that $\mathcal{G}\subseteq\mathcal{H}$.
\epf

\begin{remark}\label{rem:quasiFrobenius}
As mentioned in the introduction, one cannot expect to obtain a complete hereditary cotorsion triplet from any balanced pair. After checking the statement of Proposition~\ref{prop:from_balance_to_triplets}, it seems difficult to obtain such triplets from a balanced pair $(\mathcal{F,L})$ without assuming condition (2). For example, for any ring $R$ we have the trivial balanced pair $(\mathsf{Mod}(R),\mathsf{Mod}(R))$ by setting $\mathcal{F} = \mathcal{L} = \mathsf{Mod}(R)$. However, we know from Example~\ref{ex:triplets}~\eqref{ex:quasiFrobenius} that the triplet $(\mathsf{Mod}(R),\mathcal G,\mathsf{Mod}(R))$ is complete if, and only if, $R$ is quasi-Frobenius. Note that in this case, we have $\mathcal{F} \cap \mathcal{F}^\perp = \mathsf{Inj}(R)$ and ${}^\perp\mathcal{L} \cap \mathcal{L} = \mathsf{Proj}(R)$, and thus condition (2) in Proposition~\ref{prop:from_balance_to_triplets} holds if, and only if, $R$ is quasi-Frobenius.
\end{remark}

As an immediate consequence of Propositions~\ref{prop.relation.balance.triplets} and \ref{prop:from_balance_to_triplets} we get the following.

\begin{cor}\label{cor.b.pairs.cot.triplets}
Let $\mathcal C$ be an abelian category with enough projectives and injectives. If $(\mathcal F,\mathcal H)$ and $(\mathcal G,\mathcal L)$ are complete hereditary cotorsion pairs in $\mathcal{C}$ with $\mathcal F\cap\mathcal H\subseteq \mathcal G$ and $\mathcal G\cap \mathcal L\subseteq \mathcal H$, then $\mathcal H=\mathcal G$ if and only if $(\mathcal F,\mathcal L)$ is a balanced pair in $\mathcal{C}$.
\end{cor}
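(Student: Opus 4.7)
The plan is to derive the corollary as a direct dual application of Propositions~\ref{prop.relation.balance.triplets} and \ref{prop:from_balance_to_triplets}, after translating the hypotheses on $\mathcal{F} \cap \mathcal{H}$ and $\mathcal{G} \cap \mathcal{L}$ into the form required by the latter proposition. The key preliminary observation I will use is that since both cotorsion pairs are hereditary, we have the identifications $\mathcal{F}^\perp = \mathcal{H}$ and ${}^\perp\mathcal{L} = \mathcal{G}$ (with the ``full'' $\perp$ involving all positive $\Ext^i$, as recorded in the Preliminaries).

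For the forward direction, I assume $\mathcal{H} = \mathcal{G}$. Then the triple $(\mathcal{F},\mathcal{G},\mathcal{L}) = (\mathcal{F},\mathcal{H},\mathcal{L})$ consists of the two given complete hereditary cotorsion pairs, and hence is a complete hereditary cotorsion triplet in the sense of Definition~\ref{df:triplets}. Proposition~\ref{prop.relation.balance.triplets} then produces the admissible balanced pair $(\mathcal{F},\mathcal{L})$.

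For the reverse direction, I assume $(\mathcal{F},\mathcal{L})$ is balanced and verify the three hypotheses of Proposition~\ref{prop:from_balance_to_triplets}. Closure under direct summands and the resolving/coresolving conditions, together with special precovering/preenveloping, follow from the fact that $\mathcal F$ and $\mathcal L$ are the respective halves of the given complete hereditary cotorsion pairs $(\mathcal F,\mathcal H)$ and $(\mathcal G,\mathcal L)$; note $\mathcal{F}$ automatically contains the projectives and $\mathcal{L}$ the injectives. Using the identifications above, the inclusions $\mathcal{F}\cap\mathcal{H}\subseteq\mathcal{G}$ and $\mathcal{G}\cap\mathcal{L}\subseteq\mathcal{H}$ become exactly $\mathcal{F}\cap\mathcal{F}^\perp\subseteq{}^\perp\mathcal{L}$ and ${}^\perp\mathcal{L}\cap\mathcal{L}\subseteq\mathcal{F}^\perp$, which is condition~(2) of Proposition~\ref{prop:from_balance_to_triplets}. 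Condition~(3) is the standing assumption that $(\mathcal{F},\mathcal{L})$ is balanced. Proposition~\ref{prop:from_balance_to_triplets} therefore applies and delivers a complete hereditary cotorsion triplet whose middle class is, by construction in that proof, simultaneously equal to $\mathcal{F}^\perp$ and to ${}^\perp\mathcal{L}$; that is, $\mathcal{H}=\mathcal{G}$, as required.

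There is no real obstacle here: the statement is genuinely a packaging corollary, and the only subtle point is the translation step, so the proof will consist mainly of recording the identifications $\mathcal{F}^\perp=\mathcal{H}$ and ${}^\perp\mathcal{L}=\mathcal{G}$ and citing the two propositions in turn. I would keep the write-up to a few lines, emphasizing that the hypothesis $\mathcal{F}\cap\mathcal{H}\subseteq\mathcal{G}$, $\mathcal{G}\cap\mathcal{L}\subseteq\mathcal{H}$ is precisely what makes the ``balance $\Rightarrow$ triplet'' implication of Proposition~\ref{prop:from_balance_to_triplets} applicable, in line with Remark~\ref{rem:quasiFrobenius}.
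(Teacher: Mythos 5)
Your proof is correct and follows exactly the route the paper intends: the authors preface the corollary with ``As an immediate consequence of Propositions~\ref{prop.relation.balance.triplets} and \ref{prop:from_balance_to_triplets} we get the following,'' and your write-up is precisely a spelled-out version of that, with the key translation step $\mathcal{F}^\perp=\mathcal{H}$, ${}^\perp\mathcal{L}=\mathcal{G}$ (valid by hereditarity) correctly identified and used. No gaps.
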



\subsection*{Virtually Gorensteins rings, balanced pairs and cotorsion triplets}

We close this section presenting a first application of the relation between balanced pairs and cotorsion triplets described in Propositions~\ref{prop.relation.balance.triplets} and \ref{prop:from_balance_to_triplets}, in the context of virtually Gorenstein rings (a notion originally due to Beligiannis and Reiten in \cite{BR} for Artin algebras). More applications will be given later on for the categories of quasi-coherent sheaves and $\mathcal{C}$-valued representations of quivers. These two settings will be studied in more detail in Sections \ref{sec:flat_balance} and \ref{sec:quiver}, respectively.

The balanced pair $(\mathsf{GProj}(R),\mathsf{GInj}(R))$ from Example~\ref{ex:triplets} \eqref{ex:GorensteinTriplet} can be obtained under different assumptions on $R$. As a matter of fact, the existence of $(\mathsf{GProj}(R),\mathsf{GInj}(R))$ as a balanced pair in $\mathsf{Mod}(R)$ is a necessary and sufficient condition for certain rings $R$ to be virtually Gorenstein. Recall that a (non-necessarily commutative) ring $R$ is called \emph{virtually Gorenstein} provided that $(\mathsf{GProj}(R))^{\perp}= {^{\perp}} (\mathsf{GInj}(R))$.

In the case where $R$ is a Noetherian ring of finite Krull dimension, it is proved by Zareh-Khoshchehreh, Asgharzadeh and Divaani-Aazar in \cite[Theorem 3.10]{ZAD} that $R$ is virtually Gorenstein if, and only if, $(\mathsf{GProj}(R),\mathsf{GInj}(R))$ is a balanced pair in $\mathsf{Mod}(R)$. This is an important recent result for which we will present two extensions in Corollaries~\ref{cor:ZAD1} and \ref{cor:ZAD2}. The former adds an extra condition in this equivalence, namely the existence of a cotorsion triplet $(\mathsf{GProj}(R),\mathcal{G},\mathsf{GInj}(R))$ in $\mathsf{Mod}(R)$. For the latter extension, on the other hand, we will require some concepts and techniques from Representation Theory of Quivers,  covered in Section~\ref{sec:quiver}.

\begin{cor}\label{cor:ZAD1}
Let $R$ be a commutative Noetherian ring with finite Krull dimension. Then, the following conditions are equivalent.
\begin{itemize}
\item[(a)] $R$ is a virtually Gorenstein ring.

\item[(b)] $(\mathsf{GProj}(R),\mathsf{GInj}(R))$ is a balanced pair in $\mathsf{Mod}(R)$.

\item[(c)] There is a complete hereditary cotorsion triplet $(\mathsf{GProj}(R),\mathcal{G},\mathsf{GInj}(R))$ in $\mathsf{Mod}(R)$.
\end{itemize}
\end{cor}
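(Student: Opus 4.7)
The equivalence $(a) \Leftrightarrow (b)$ is precisely \cite[Theorem 3.10]{ZAD}, so it suffices to establish the implications involving (c). The easier direction $(c) \Rightarrow (b)$ is immediate from Proposition~\ref{prop.relation.balance.triplets}: any complete hereditary cotorsion triplet $(\mathsf{GProj}(R), \mathcal{G}, \mathsf{GInj}(R))$ automatically yields an admissible balanced pair $(\mathsf{GProj}(R), \mathsf{GInj}(R))$ at its extremes.

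For $(a) \Rightarrow (c)$, my plan is to produce the middle class $\mathcal{G}$ explicitly. Assume $R$ is virtually Gorenstein and set $\mathcal{G} := \mathsf{GProj}(R)^\perp = {}^\perp\mathsf{GInj}(R)$; the coincidence of these two classes is precisely the definition of virtually Gorenstein. Under the hypothesis that $R$ is commutative Noetherian of finite Krull dimension, it is well known (via J\o rgensen's cover theorem for Gorenstein projectives and Krause's envelope theorem for Gorenstein injectives) that $\mathsf{GProj}(R)$ is special precovering, resolving and closed under direct summands, while $\mathsf{GInj}(R)$ is special preenveloping, coresolving and closed under direct summands. Hence, by Lemma~\ref{lema.complete.hered.sprecovering}, both $(\mathsf{GProj}(R), \mathsf{GProj}(R)^\perp)$ and $({}^\perp\mathsf{GInj}(R), \mathsf{GInj}(R))$ are complete hereditary cotorsion pairs in $\mathsf{Mod}(R)$, and their middle classes both equal $\mathcal{G}$. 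The resulting triplet $(\mathsf{GProj}(R), \mathcal{G}, \mathsf{GInj}(R))$ is therefore complete and hereditary.

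An alternative derivation of $(b) \Rightarrow (c)$ bypasses (a) only partially: one can invoke Proposition~\ref{prop:from_balance_to_triplets} with $\mathcal{F} = \mathsf{GProj}(R)$ and $\mathcal{L} = \mathsf{GInj}(R)$. Condition (1) of that proposition is the resolving/special precovering (resp.\ coresolving/special preenveloping) package cited above; condition (3) is exactly (b); and condition (2), namely $\mathsf{GProj}(R) \cap \mathsf{GProj}(R)^\perp \subseteq {}^\perp\mathsf{GInj}(R)$ together with its dual, follows from virtual Gorensteinness, which is available via the ZAD equivalence $(b) \Leftrightarrow (a)$. Either route closes the cycle of implications.

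The main obstacle is not really categorical: the triplet assembles itself almost tautologically once the two relevant complete hereditary cotorsion pairs are in place and the virtually Gorenstein hypothesis identifies their middle classes. The mathematical content sits in the cotorsion-pair existence results for $\mathsf{GProj}(R)$ and $\mathsf{GInj}(R)$, which is exactly where the commutative Noetherian finite Krull dimension assumption is critical; everything else follows formally from Propositions~\ref{prop.relation.balance.triplets} and~\ref{prop:from_balance_to_triplets} combined with \cite[Theorem 3.10]{ZAD}.
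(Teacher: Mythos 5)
Your proof is correct, and your primary route for the nontrivial implication differs from the paper's in a clean and instructive way. The paper closes the cycle via $(b) \Rightarrow (c)$: it invokes Proposition~\ref{prop:from_balance_to_triplets} with $\mathcal{F} = \mathsf{GProj}(R)$, $\mathcal{L} = \mathsf{GInj}(R)$, and verifies hypothesis (2) of that proposition \emph{unconditionally} by noting $\mathsf{GProj}(R) \cap \mathsf{GProj}(R)^\perp = \mathsf{Proj}(R) \subseteq {}^\perp\mathsf{GInj}(R)$ and ${}^\perp\mathsf{GInj}(R) \cap \mathsf{GInj}(R) = \mathsf{Inj}(R) \subseteq \mathsf{GProj}(R)^\perp$; balance alone then forces the middle classes to agree. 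Your main argument instead proves $(a) \Rightarrow (c)$ directly: you define $\mathcal{G} := \mathsf{GProj}(R)^\perp = {}^\perp\mathsf{GInj}(R)$ straight from the definition of virtually Gorenstein, observe that the cotorsion pairs $(\mathsf{GProj}(R), \mathsf{GProj}(R)^\perp)$ and $({}^\perp\mathsf{GInj}(R), \mathsf{GInj}(R))$ are each complete and hereditary (via Lemma~\ref{lema.complete.hered.sprecovering} and the special precovering/preenveloping facts under the Noetherian finite Krull dimension hypothesis), and conclude. This sidesteps Proposition~\ref{prop:from_balance_to_triplets} and the balance condition entirely, which is arguably more economical; the trade-off is that the paper's route shows how the abstract machinery of Section~4 specializes here, which is one of the article's goals, whereas yours exposes that the triplet assembles tautologically from virtual Gorensteinness once the two cotorsion pairs are available. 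Your secondary route through Proposition~\ref{prop:from_balance_to_triplets} is also valid but appeals to $(a)$ to check hypothesis (2), which is slightly circuitous compared to the paper's unconditional containments $\mathsf{Proj}(R) \subseteq {}^\perp\mathsf{GInj}(R)$ and $\mathsf{Inj}(R) \subseteq \mathsf{GProj}(R)^\perp$. One small bibliographic note: the paper cites \cite[Proposition 6]{EIO} for the special precovering property of $\mathsf{GProj}(R)$ under these hypotheses, rather than J\o rgensen; both are acceptable sources, but the EIO reference is the one that matches the exact generality used.
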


\begin{proof}
The equivalence (a) $\Leftrightarrow$ (b) is \cite[Theorem 3.10]{ZAD}, which also holds in the non commutative case. The implication (c) $\Rightarrow$ (b) is an immediate consequence of Proposition~\ref{prop.relation.balance.triplets}. So the proof will conclude after showing (b) $\Rightarrow$ (c).

Suppose that the classes $\mathsf{GProj}(R)$ and $\mathsf{GInj}(R)$ form a balanced pair $(\mathsf{GProj}(R),\mathsf{GInj}(R))$. Firstly, it is well known for any arbitrary ring $R$ that the classes $\mathsf{GProj}(R)$ and $\mathsf{GInj}(R)$ are resolving and coresolving, respectively, and that $\mathsf{GProj}(R) \cap (\mathsf{GProj}(R))^\perp = \mathsf{Proj}(R) \subseteq {}^\perp(\mathsf{GInj}(R))$ and ${}^\perp(\mathsf{GInj}(R)) \cap \mathsf{GInj}(R) = \mathsf{Inj}(R) \subseteq (\mathsf{GProj}(R))^\perp$. Moreover, since $R$ is Noetherian we have by Krause \cite[Theorem 7.12]{Krause} that $\mathsf{GInj}(R)$ is special preenveloping. On the other hand, since also $R$ is commutative with finite Krull dimension, we have that $\mathsf{GProj}(R)$ is special precovering (see e.g. \cite[Proposition 6]{EIO}). Thus, we are under the hypotheses of Proposition~\ref{prop:from_balance_to_triplets}, which says that there must exist a complete hereditary cotorsion triplet $(\mathsf{GProj}(R),\mathcal{G},\mathsf{GInj}(R))$ in $\mathsf{Mod}(R)$.
\end{proof}


\section{Balance with flat objects}\label{sec:flat_balance}

In this section, we first give a different proof to that of Enochs in \cite[Theorem 4.1]{Enochs} about the lack of balance with respect to the class of flat modules, in case the ring $R$ is left Noetherian and non-perfect.


\subsection*{Balance and closure under direct sums and products}

We start with the following consequence of balance in abelian categories. We recall that an abelian category satisfies AB4 if it is cocomplete and any direct sum of monomorphisms is a monomorphism. The axiom AB4* of an abelian category is dual.

\begin{lem}\label{lemma.noflatbalance}
Let $\mathcal F$ and $\mathcal L$ be two classes of objects in $\mathcal C$ such that $(\mathcal{F,L})$ is a balanced pair. Then, the following statements hold:
\begin{enumerate}
\item If $\mathcal C$ satisfies AB4, has enough injectives and any direct sum of injective objects belongs to $\mathcal F^{\perp_1}$, then $\mathcal F^{\perp_1}$ is closed under direct sums.

\item If $\mathcal C$ satisfies AB4*, has enough projectives and any direct product of projective objects belongs to $^{\perp_1}\mathcal L$, then $^{\perp_1}\mathcal L$ is closed under direct products.
\end{enumerate}
\end{lem}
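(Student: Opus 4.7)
The plan is to apply Lemma \ref{lem 3}(b) twice in succession: first to transport the acyclicity of an injective copresentation of each $X_i$ from the $\Hom_{\mathcal C}(\mathcal F,-)$ side to the $\Hom_{\mathcal C}(-,\mathcal L)$ side, then to propagate that acyclicity to the direct-sum sequence, and finally to deduce the vanishing of $\Ext^1_{\mathcal C}(F,\bigoplus_{i} X_i)$ from the hypothesis on direct sums of injectives. Since (1) and (2) are formal duals, I would concentrate on (1).

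Fix a family $\{X_i\}_{i\in I}\subseteq \mathcal F^{\perp_1}$. Since $\mathcal C$ has enough injectives, for each $i$ pick a short exact sequence
\[
0 \to X_i \to E_i \to C_i \to 0
\]
with $E_i$ injective. As $\Ext^1_{\mathcal C}(F,X_i)=0$ for every $F\in\mathcal F$, the associated long $\Ext^{\ast}_{\mathcal C}(F,-)$-sequence shows that this short exact sequence is $\Hom_{\mathcal C}(\mathcal F,-)$-acyclic, and Lemma \ref{lem 3}(b) then promotes it to a $\Hom_{\mathcal C}(-,\mathcal L)$-acyclic sequence.

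Now pass to direct sums. By AB4, the induced sequence
\[
0 \to \bigoplus_{i\in I} X_i \to \bigoplus_{i\in I} E_i \to \bigoplus_{i\in I} C_i \to 0
\]
is still exact. For any $L\in\mathcal L$, applying $\Hom_{\mathcal C}(-,L)$ turns it into the product over $i\in I$ of the $\Hom_{\mathcal C}(-,L)$-acyclic exact sequences of abelian groups obtained in the previous step; since a product of short exact sequences of abelian groups is short exact, the displayed sequence is itself $\Hom_{\mathcal C}(-,\mathcal L)$-acyclic. A second application of Lemma \ref{lem 3}(b), in the opposite direction, then upgrades it to being $\Hom_{\mathcal C}(\mathcal F,-)$-acyclic.

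Finally, applying $\Hom_{\mathcal C}(F,-)$ to the displayed sequence for an arbitrary $F\in\mathcal F$, the hypothesis $\bigoplus_i E_i\in\mathcal F^{\perp_1}$ makes $\Ext^1_{\mathcal C}(F,\bigoplus_i E_i)$ vanish at the right end of the long exact sequence, while the acyclicity just established yields surjectivity of $\Hom_{\mathcal C}(F,\bigoplus_i E_i)\to \Hom_{\mathcal C}(F,\bigoplus_i C_i)$; together these force $\Ext^1_{\mathcal C}(F,\bigoplus_i X_i)=0$, so that $\bigoplus_i X_i\in\mathcal F^{\perp_1}$ as desired. Part (2) is obtained by the exact dual argument, replacing injective envelopes by projective precovers, AB4 by AB4*, and direct sums by direct products. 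The main conceptual step—and the place where I expect the only real difficulty to lie—is the twofold use of balance to convert pointwise information about $X_i$ into global information about $\bigoplus_i X_i$; everything else is formal diagram chasing with long exact sequences.
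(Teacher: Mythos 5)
Your proof is correct and follows essentially the same route as the paper's: take injective copresentations of each $X_i$, transfer their $\Hom_{\mathcal C}(\mathcal F,-)$-acyclicity to $\Hom_{\mathcal C}(-,\mathcal L)$-acyclicity via Lemma~\ref{lem 3}, pass to the direct-sum sequence using the natural isomorphism $\prod_i\Hom_{\mathcal C}(-,L)\cong\Hom_{\mathcal C}(\bigoplus_i -,L)$, transfer back, and then use the hypothesis that $\bigoplus_i E_i\in\mathcal F^{\perp_1}$ to kill $\Ext^1_{\mathcal C}(F,\bigoplus_i X_i)$. The only differences from the paper are notational ($X_i/C_i$ versus $C_i/D_i$) and that the paper displays the comparison of $\prod\Hom$ with $\Hom(\bigoplus,-)$ as an explicit commutative diagram.
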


\pf Let $\{ C_i\}$ be a family of objects in $\mathcal{F}^{\perp_1}$ and
\[
0\to C_i\to E_i\to D_i\to 0
\]
be a family of exact sequences with each $E_i$ injective. Since each $C_i \in \mathcal{F}^{\perp_1}$, each of these sequences is ${\Hom}_{\mathcal C}(\mathcal F,-)$-exact. Hence by Lemma \ref{lem 3}, they will be ${\Hom}_{\mathcal C}(-,\mathcal L)$-exact. So, for each $i$ and each $L\in \mathcal L$, we have the exact sequence of abelian groups
\[
0\to {\Hom}_{\mathcal C}(D_i,L)\to {\Hom}_{\mathcal C}(E_i,L)\to {\Hom}_{\mathcal C}(C_i,L)\to 0.
\]
We can take the direct product of the previous family of short exact sequences to get the exact sequence
\[
0 \to \prod_i {\Hom}_{\mathcal C}(D_i,L)\to \prod_i{\Hom}_{\mathcal C}(E_i,L)\to \prod_i{\Hom}_{\mathcal C}(C_i,L)\to 0.
\]
Now, we have the following commutative diagram

\[
\xymatrix{
0 \ar[r] & \displaystyle\operatorname*{\prod}_i {\Hom}_{\mathcal{C}}(D_i,L) \ar[d]^{\simeq}\ar[r] & \displaystyle\operatorname*{\prod}_i {\Hom}_{\mathcal{C}}(E_i,L) \ar[d]^{\simeq} \ar[r] & \displaystyle\operatorname*{\prod}_i {\Hom}_{\mathcal{C}}(C_i,L) \ar[d]^{\simeq} \ar[r] & 0 \\
0 \ar[r] & {\Hom}_{\mathcal{C}}\Big(\displaystyle\operatorname*{\bigoplus}_i D_i, L\Big) \ar[r] & {\Hom}_{\mathcal{C}}\Big(\displaystyle\operatorname*{\bigoplus}_i E_i,L\Big) \ar[r] & {\Hom}_{\mathcal{C}}\Big(\displaystyle\operatorname*{\bigoplus}_i C_i,L\Big) \ar[r] & 0
}
\]
where the columns are natural isomorphisms. The bottom row tells us that the exact sequence
\[
0 \to \bigoplus_i C_i\to \bigoplus_i E_i \to \bigoplus_i D_i \to 0
\]
is ${\Hom}_{\mathcal{C}}(-,\mathcal L)$-exact. Since $(\mathcal F, \mathcal L)$ is balanced, by applying Lemma \ref{lem 3} again, it follows that the sequence is ${\Hom}_{\mathcal{C}}(\mathcal F,-)$-exact. Since $\oplus_i E_i\in \mathcal F^{\perp_1}$ by hypothesis, it follows from the usual long exact sequence of cohomology that $\Ext^1_{\mathcal C}(F,\oplus_i C_i) = 0$ for each $F\in \mathcal{F}$, that is, $\oplus_i C_i \in \mathcal{F}^{\perp_1}$.

The proof of (2) is dual.
\epf


\subsection*{Lack of balance with respect to flat modules}

We are now in position to give a short proof of the aforementioned result of \cite[Theorem 4.1]{Enochs}. In what follows, we will denote by $\mathsf{Flat}(R)$ the class of flat left $R$-modules.

\begin{thm}\label{theorem.noflatbalance}
Let $R$ be a left Noetherian ring. The class of flat modules is the left part of a balanced pair if, and only if, the ring $R$ is left perfect.
\end{thm}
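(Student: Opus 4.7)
The strategy I would pursue is to extract from the balance assumption a direct-sum closure property of the cotorsion class, and then invoke a known ring-theoretic characterization of left perfect rings. The easy direction is immediate: if $R$ is left perfect then Bass's theorem gives $\mathsf{Flat}(R) = \mathsf{Proj}(R)$, and so $\mathsf{Flat}(R)$ is the left part of the canonical balanced pair $(\mathsf{Proj}(R),\mathsf{Inj}(R))$ in $\mathsf{Mod}(R)$.

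For the converse, assume $(\mathsf{Flat}(R),\mathcal{L})$ is a balanced pair for some class $\mathcal{L}$. First I would verify the hypotheses of Lemma~\ref{lemma.noflatbalance}(1) with $\mathcal{C} = \mathsf{Mod}(R)$ and $\mathcal{F} = \mathsf{Flat}(R)$. The category $\mathsf{Mod}(R)$ satisfies AB4 and has enough injectives; by the Bass--Papp theorem, since $R$ is left Noetherian any direct sum of injective left $R$-modules is again injective; and every injective module is cotorsion, that is, lies in $\mathsf{Flat}(R)^{\perp_1}$, since $\Ext^1_{\mathsf{Mod}(R)}(-,E) = 0$ whenever $E$ is injective. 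Thus Lemma~\ref{lemma.noflatbalance}(1) applies and yields that $\mathsf{Flat}(R)^{\perp_1}$, the class of cotorsion left $R$-modules, is closed under arbitrary direct sums.

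Finally, I would invoke the classical characterization: a ring $R$ is left perfect if and only if the class of cotorsion left $R$-modules is closed under direct sums (a result going back to work of Guil Asensio--Herzog, and also treatable through the structure theory in Xu's book on flat covers). Applying this to the closure property obtained in the previous paragraph gives that $R$ is left perfect, completing the argument.

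The main obstacle is precisely this last step: identifying and citing an appropriate ring-theoretic theorem that converts direct-sum closure of the cotorsion class into left perfectness. Once that external input is in place, the whole argument is formal: Lemma~\ref{lemma.noflatbalance}(1) does the categorical work of translating ``balance'' into ``closure of the $\perp_1$-class under direct sums,'' and the ring-theoretic input finishes the job. This explains both why the proof can be ``short and categorical,'' and why it naturally extends beyond the commutative setting treated by Enochs in \cite[Theorem 4.1]{Enochs}, as the extraction of the closure property via Lemma~\ref{lemma.noflatbalance}(1) nowhere uses commutativity.
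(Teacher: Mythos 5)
Your proof is correct and matches the paper's argument essentially line for line: the easy direction via Bass's theorem, the hard direction by verifying the hypotheses of Lemma~\ref{lemma.noflatbalance}(1) (AB4, enough injectives, Bass--Papp for the Noetherian hypothesis, injectives being cotorsion) to conclude that the cotorsion class is closed under direct sums, and then invoking Guil Asensio--Herzog \cite[Theorem 19]{GH} to obtain left perfectness. The only difference is cosmetic: you spell out the hypothesis-checking a bit more explicitly than the paper does.
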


\pf
Let us first prove the impication $(\Leftarrow)$. If $R$ is left perfect, then the class of flat modules coincides with the class of projective modules, hence we get the standard balanced pair $(\mathsf{Proj}(R),\mathsf{Inj}(R))$ in $\mathsf{Mod}(R)$.

In order to show the converse implication $(\Rightarrow)$, suppose there is a balanced pair $(\mathsf{Flat}(R),\mathcal L)$ for some class of modules $\mathcal L$. Since $R$ is left Noetherian, any direct sum of injective modules is injective. Therefore, we are in the assumptions of part (1) of Lemma~\ref{lemma.noflatbalance}, that says that the class $(\mathsf{Flat}(R))^{\perp_1}$ of cotorsion modules is closed under direct sums. But then by Guil Asensio and Herzog \cite[Theorem 19]{GH}, the ring $R$ must be left perfect.
\epf

Following the philosophy of \cite[Section~5]{Enochs}, we want to mention other cases for which Theorem~\ref{theorem.noflatbalance} is also valid. First, one can state a chain complex version of  Theorem~\ref{theorem.noflatbalance} by noticing some facts. Firstly, recall that a chain complex is flat if it is exact with flat cycles. Also, projective and injective complexes have similar descriptions. So if $\bm{{\rm Flat}}(R)$ denotes the class of flat complexes, we can note that if $(\bm{{\rm Flat}}(R))^{\perp_1}$ is closed under direct sums, then so will be the class $(\mathsf{Flat}(R))^{\perp_1}$ of cotorsion modules. For it suffices to note that for every cotorsion module $C$, the complex $\underline{C} = \cdots \to 0 \to C \to 0 \to \cdots$ belongs to $(\bm{{\rm Flat}}(R))^{\perp_1}$. This follows applying a well known natural isomorphism appearing in \cite[Lemma 4.2]{GillespieDegree}.

The other context we are interested in is the category of quasi-coherent sheaves on a scheme $X$, presented in the following section.


\subsection*{Lack of balance with respect to flat quasi-coherent modules on a scheme}

From now until the end of this section all rings are commutative.

Let $\mathfrak{Qcoh}(X)$ denote the category of quasi-coherent sheaves on a scheme $X$. The corresponding version of Theorem~\ref{theorem.noflatbalance} for $\mathfrak{Qcoh}(X)$ is formulated below in Corollary~\ref{cor:balance_scheme}. This result answers the question (6) posted in \cite[Section 6]{Enochs} in the negative.

For a better understanding of Corollary~\ref{cor:balance_scheme}, we need to recall a few well-known facts about $\mathfrak{Qcoh}(X)$. First, a scheme $X$ is called \emph{semi-separated} if it has a \emph{semi-separating} open affine covering $\mathfrak{U}=\{U_i:\ i\in I\}$, that is, for each $i,k\in I$ the intersection $U_i\cap U_k$ is also an open affine. For each $i\in I$, the canonical inclusion $\iota_i\mathcolon U_i\to X$ gives an adjoint pair $(\iota^*_i,\iota_*^i)$, where
\[
\iota^*_i\mathcolon \Qcoh(X) \to \Qcoh(U_i) \mbox{ \ and \ } \iota_*^i\mathcolon \Qcoh(U_i)\to \Qcoh(X)
\]
are the inverse and direct image functors, respectively. In general, the direct image functor $\iota_*^i$ does not preserve quasi-coherence, but it does for semi-separated schemes $X$. So, for each $U_i$, we have an isomorphism
\[
\Hom_{\Qcoh(U_i)}(\iota^*_i\mathscr H,\mathscr{T}) \cong \Hom_{{\Qcoh(X)}}(\mathscr{H},\iota_*^i\mathscr{T}).
\]
Since, for each open affine $U_i$, the categories $\mathsf{Mod}(\R(U_i))$ and $\Qcoh(U_i)$ are equivalent by a well known result of Grothendieck, we can write the previous isomorphism as
\[
\Hom_{\mathcal O_X(U_i)}(\mathscr{H}(U_i),T) \cong \Hom_{{\Qcoh(X)}}(\mathscr{H},\iota_*^i(T)),
\]
for any $\R(U_i)$-module $T$ and any quasi-coherent sheaf $\mathscr H$. We recall that a scheme is \emph{Noetherian} if it is quasi-compact and it possesses an open affine covering $\mathfrak{U}=\{U_1,\ldots,U_n\}$ such that, for each $i=1,\ldots, n$, $\R(U_i)$ is a Noetherian ring.

Let $\mathfrak{Flat}(X)$ denote the class of flat quasi-coherent sheaves over $X$ in the following result.

\begin{cor}\label{cor:balance_scheme}
Let $X$ be a Noetherian and semi-separated scheme, with semi-separating open affine covering $\mathfrak{U} = \{U_1,\ldots,U_n\}$. Assume that $\R(U_i)$ is a Noetherian but not Artinian ring, for some $i \in \{1, \ldots, n\}$. Then, $\mathfrak{Flat}(X)$ is not the left part of a balanced pair in $\mathfrak{Qcoh}(X)$.
\end{cor}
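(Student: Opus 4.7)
The plan is to adapt the strategy of Theorem~\ref{theorem.noflatbalance} to the scheme setting by descending from $\mathfrak{Qcoh}(X)$ to the affine $U_i$ and then invoking \cite[Theorem 19]{GH}. Suppose for contradiction that $\mathfrak{Flat}(X)$ is the left part of a balanced pair $(\mathfrak{Flat}(X),\mathcal{L})$ in $\mathfrak{Qcoh}(X)$. Since $\mathfrak{Qcoh}(X)$ is Grothendieck (in particular AB4 with enough injectives) and $X$ is Noetherian, direct sums of injective quasi-coherent sheaves remain injective, and in particular they lie in $(\mathfrak{Flat}(X))^{\perp_1}$. By Lemma~\ref{lemma.noflatbalance}(1), the class $(\mathfrak{Flat}(X))^{\perp_1}$ is then closed under arbitrary direct sums.

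Write $R := \mathcal{O}_X(U_i)$. My aim is to show that the class $(\mathsf{Flat}(R))^{\perp_1}$ of cotorsion $R$-modules is likewise closed under direct sums, for then \cite[Theorem 19]{GH} forces $R$ to be perfect, and a Noetherian perfect ring is Artinian, contradicting the hypothesis on $R$. The reduction rests on three properties of the adjoint pair $(\iota_i^*,\iota_*^i)$, all consequences of semi-separatedness: (i) $\iota_*^i$ is exact on $\mathfrak{Qcoh}$, since $\iota_i$ is an affine morphism; (ii) $\iota_*^i$ commutes with arbitrary direct sums of quasi-coherent sheaves; and (iii) $\iota_*^i$ preserves flatness, which is the computational heart of the plan and which I verify locally: for any affine open $V\subseteq X$, the intersection $U_i\cap V$ is affine and the induced map $\mathcal{O}_X(V)\to \mathcal{O}_X(U_i\cap V)$ is a localization, hence flat, so that
\[
(\iota_*^i \tilde{M})(V) \cong M \otimes_R \mathcal{O}_X(U_i\cap V)
\]
is a flat $\mathcal{O}_X(V)$-module whenever $M$ is flat. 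Combined with exactness of $\iota_i^*$, items (i) and (ii) yield the standard derived-adjunction identity
\[
\Ext^1_{\mathfrak{Qcoh}(X)}(A, \iota_*^i B) \cong \Ext^1_{\mathfrak{Qcoh}(U_i)}(\iota_i^* A, B)
\]
for all $A\in\mathfrak{Qcoh}(X)$ and $B\in\mathfrak{Qcoh}(U_i)$.

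With these tools, the descent proceeds in three short steps. First, for any cotorsion $R$-module $M$ and any flat $\mathscr{F}\in\mathfrak{Flat}(X)$, the adjunction above gives $\Ext^1_{\mathfrak{Qcoh}(X)}(\mathscr{F}, \iota_*^i \tilde{M}) \cong \Ext^1_R(\mathscr{F}(U_i), M) = 0$, because $\mathscr{F}(U_i)$ is flat over $R$; hence $\iota_*^i \tilde{M} \in (\mathfrak{Flat}(X))^{\perp_1}$. Second, for any family $\{M_j\}_{j\in J}$ of cotorsion $R$-modules, property (ii) gives $\iota_*^i \widetilde{\bigoplus_j M_j} \cong \bigoplus_j \iota_*^i \tilde{M_j}$, which then sits inside $(\mathfrak{Flat}(X))^{\perp_1}$ by the closure property deduced from balance. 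Third, for any flat $R$-module $F$, property (iii) makes $\iota_*^i \tilde{F}$ a flat quasi-coherent sheaf on $X$, so
\[
\Ext^1_R\Big(F,\, \bigoplus_{j} M_j\Big) \cong \Ext^1_{\mathfrak{Qcoh}(X)}\Big(\iota_*^i \tilde{F},\, \iota_*^i \widetilde{\bigoplus_j M_j}\Big) = 0,
\]
which shows that $\bigoplus_j M_j$ is cotorsion over $R$ and closes the reduction.

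The step I expect to be the main obstacle is the verification of the three functorial properties of $\iota_*^i$, most delicately the preservation of flatness in (iii); this is precisely the point at which the semi-separated hypothesis on $X$ is used in an essential way. Once these properties are in hand, the remainder of the argument is a short diagram of \textrm{Ext} computations, the closure property supplied by Lemma~\ref{lemma.noflatbalance}(1), and the appeal to Guil Asensio and Herzog.
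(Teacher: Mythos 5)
Your proposal is correct and follows essentially the same route as the paper: deduce closure under direct sums of the cotorsion class in $\mathfrak{Qcoh}(X)$ from Lemma~\ref{lemma.noflatbalance}(1) (using that $X$ Noetherian makes $\mathfrak{Qcoh}(X)$ locally Noetherian), push cotorsion $\R(U_i)$-modules forward, transport the direct-sum closure back via the adjunction between $\iota_i^*$ and $\iota_*^i$, and invoke Guil Asensio--Herzog to force $\R(U_i)$ to be Artinian. The only real difference is that the paper outsources the two technical facts you prove directly --- that $\iota_*^i$ preserves cotorsion objects and the $\Ext^1$-isomorphism $\Ext^1_{\R(U_i)}(F, \oplus_k C_k) \cong \Ext^1_{\mathfrak{Qcoh}(X)}(\iota_*^i(F), \iota_*^i(\oplus_k C_k))$ --- to Gillespie's \cite[Lemma 6.5]{G}, whereas you derive them from exactness and flat-preservation of $\iota_*^i$ for an affine open immersion; that makes your version more self-contained but logically the same argument. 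One minor slip: you assert that $\mathcal{O}_X(V)\to\mathcal{O}_X(U_i\cap V)$ is a localization, which need not hold for a general open immersion of affines; what is true and all you need is that the map is flat, which holds because open immersions are flat, so the conclusion of item (iii) is unaffected.
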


\pf
Suppose that there is such balanced pair $(\mathfrak{Flat}(X),\mathcal{L})$ in $\Qcoh(X)$, for some class $\mathcal L$.
It is well-known that the category $\Qcoh(X)$ is Grothendieck and so it is cocomplete, satisfies AB4 and has enough injectives. Indeed, since $X$ is Noetherian, the category $\Qcoh(X)$ is locally Noetherian, hence the direct sum of injective objects in $\Qcoh(X)$ is again injective. Therefore, part (1) of Lemma~\ref{lemma.noflatbalance} tells us that the class $(\mathfrak{Flat}(X))^{\perp_1}$ of cotorsion quasi-coherent sheaves, is closed under direct sums. Now let $\{C_k\}$ be a family of cotorsion $\R(U_i)$-modules. By Gillespie \cite[Lemma 6.5]{G} the functor  $\iota^i_* \mathcolon \mathsf{Mod}(\R(U_i)) \to \Qcoh(X)$ preserves cotorsion objects. Hence, the family $\{\iota^i_*(C_k)\}$ is a family of cotorsion quasi-coherent sheaves and thus, by the previous, $\oplus_k \iota^i_*(C_k) \in (\mathfrak{Flat}(X))^{\perp_1}$. We will finish the proof by showing that this implies that $\oplus_k C_k$ is a cotorsion $\R(U_i)$-module. So, by Guil Asensio and Herzog \cite[Theorem 19]{GH}, the ring $\R(U_i)$ must be Artinian. A contradiction.

To show what we claimed, let $F$ be a flat $\R(U_i)$-module. We want to show that the equality $\Ext_{\R(U_i)}^1(F, \oplus_k C_k) = 0$ holds. Firstly, notice that $F = \iota_i^*\iota_*^i(F)$. Then, the isomorphism shown in the proof of \cite[Lemma 6.5]{G} gives
\[
\Ext^1_{\R(U_i)}(F, \oplus_k C_k) \cong \Ext^1_{\Qcoh(X)}(\iota_*^i(F), \iota_*^i(\oplus_kC_k)).
\]
The last Ext functor vanishes, because $\iota_*^i(F)$ is a flat quasi-cohent sheaf (so it belongs to $\mathfrak{Flat}(X)$) and  $\iota^i_*(\oplus_kC_k) \simeq \oplus_k \iota^i_*(C_k) \in (\mathfrak{Flat}(X))^{\perp_1}$, because the functor $\iota^i_*$ commutes with direct sums.
\epf


\section{Balance in quiver representations and cotorsion triplets}\label{sec:quiver}

Throughout this section $\mathcal{C}$ will be an abelian category with enough projectives and injectives that satisfies AB4 and AB4*.

In \cite{HJ} Holm and J\o rgensen have recently proved that, under some conditions on a quiver $Q$, a complete cotorsion pair in $\mathcal C$ induces two complete cotorsion pairs in the abelian category Rep$(Q,\mathcal C)$ of $\mathcal C$-valued representations of $Q$. Taking into account the relation between balanced pairs and cotorsion triplets, it seems natural to expect that balanced pairs in $\mathcal C$ and Rep$(Q,\mathcal C)$ should be also related. Thus we will devote this section to study the relation between balanced pairs in $\mathcal C$ and balanced pairs in Rep$(Q,\mathcal C)$. One of the consequences of our results is that they will lead us to finding new conditions over two complete hereditary cotorsion pairs to form a cotorsion triplet.


\subsection*{Adjoint Functors between $\mathcal{C}$ and $\Rep(Q,\mathcal C)$}

A \emph{quiver} $Q = (Q_0,Q_1, s,t)$ is a directed graph  with vertex set $Q_0$, arrow set $Q_1$ and two maps $s,t$ from $Q_1$ to $Q_0$ which associate to each arrow  $\alpha\in Q_1$ its source $s(\alpha)\in Q_0$ and its target $t(\alpha)\in Q_0$, respectively. The quiver $Q$ is said to be \emph{finite} if $Q_0$ and $Q_1$ are finite.

A \emph{representation} $\mathbb{X} = (\mathbb{X}_i,\mathbb{X}_\alpha)$ of $Q$ over $\mathcal C$, or a \emph{$\mathcal{C}$-valued representation}, is defined by the following data:
\begin{enumerate}
\item To each vertex $i$ in $Q_0$ is associated an object $\mathbb{X}_i \in \mathcal C$.

\item To each arrow $\alpha:i\rightarrow j$ in $Q_1$ is associated a morphism $\mathbb{X}_\alpha: \mathbb{X}_i\rightarrow \mathbb{X}_j$ in $\mathcal{C}$.
\end{enumerate}

A \emph{morphism} $f$ from $\mathbb{X}$ to $\mathbb{Y}$ is a family of morphisms $\{f_i \mathcolon \mathbb{X}_i \rightarrow \mathbb{Y}_i \}_{i \in Q_0}$ such that $\mathbb{Y}_\alpha f_i = f_j\mathbb{X}_\alpha$ for any arrow $\alpha \mathcolon i\rightarrow j\in Q_1$. We will denote by $\Rep(Q,\mathcal C)$ the category of all $\mathcal C$-valued representations of a quiver $Q$.

Define the functor $e_\lambda^i \mathcolon \mathcal C \rightarrow \Rep(Q, \mathcal C)$ as
\[
e_\lambda^i(M)_j := \bigoplus \limits_{Q(i,j)} M 
\]
for every vertex $j \in Q_0$ (see Mitchell \cite[Section 28]{M}) with $Q(i,j)$ the set of paths $p$ in $Q$ such that $s(p) = i$ and $t(p) = j$. Moreover, for an arrow $\alpha \mathcolon j \rightarrow k$, the morphism $e_\lambda^i(M)_\alpha$ is the canonical injection. Dually, the functor $e_i^\rho \colon \mathcal C\rightarrow \Rep(Q, \mathcal C)$ is defined by Enochs and Herzog in \cite{EEGI,EH} as
\[
e_i^\rho(M)_j := \prod \limits_{Q(j,i)} M 
\]
for every vertex $j \in Q_0$.

\begin{lem}\label{lem 2} \cite{EEGI,HJ}
Let $i \in Q_0$ and $(\ )_i \colon \Rep(Q, \mathcal C) \longrightarrow \mathcal C$ be the restriction functor given by  $(\mathbb{X})_i = \mathbb{X}_i$ for any representation $\mathbb{X}$ of $\Rep(Q, \mathcal C)$. Then, the following conditions hold: \\

\begin{enumerate}
\item $(\ )_i $ is a right adjoint of $ e_\lambda^i$ and a left adjoint of $ e^\rho_i$.

\item $\Ext_{\Rep(Q, \mathcal C)}^m(e_\lambda^i(Y), \mathbb{X}) \cong \Ext_{\mathcal C}^m(Y, (\mathbb{X})_i)$ for every  $m \geq 0$. 

\item $\Ext^m_{\Rep(Q, \mathcal C)}(\mathbb{X}, e^\rho_i(Y)) \cong \Ext_{\mathcal C}^m((\mathbb{X})_i, Y)$ for every $m \geq 0$.
\end{enumerate}
\end{lem}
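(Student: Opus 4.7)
The plan is to prove (1) by directly exhibiting the natural bijection on Hom-sets, and then deduce (2) and (3) from (1) plus standard exactness considerations.

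For (1), I would describe the adjunction isomorphism
\[
\Hom_{\Rep(Q,\mathcal{C})}(e_\lambda^i(M), \mathbb{X}) \;\xrightarrow{\ \sim\ }\; \Hom_{\mathcal{C}}(M, \mathbb{X}_i)
\]
as follows. A morphism $f = (f_j)_{j \in Q_0}$ out of $e_\lambda^i(M)$ has components $f_j \colon \bigoplus_{p \in Q(i,j)} M \to \mathbb{X}_j$. The compatibility condition $\mathbb{X}_\alpha f_j = f_k e_\lambda^i(M)_\alpha$ for each arrow $\alpha \colon j \to k$ forces the restriction of $f_k$ to the summand indexed by a path $\alpha p$ to equal $\mathbb{X}_\alpha$ composed with the restriction of $f_j$ to the summand indexed by $p$. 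Iterating, every restriction is determined by the restriction of $f_i$ to the summand indexed by the trivial path $e_i$ at $i$. Thus the assignment $f \mapsto f_i \circ \iota_{e_i}$ is bijective, with inverse sending $g \colon M \to \mathbb{X}_i$ to the morphism whose restriction to the copy of $M$ indexed by $p \colon i \to j$ is $\mathbb{X}_p \circ g$. Naturality in both variables is straightforward. The adjunction $((\ )_i,\, e_i^\rho)$ is established by the dual argument, this time using universal properties of products.

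For (2) and (3), the key observation is that the restriction functor $(\ )_i$ is exact, since kernels, cokernels, and exactness in $\Rep(Q,\mathcal{C})$ are all computed vertexwise. Moreover, $e_\lambda^i$ is exact because it is built vertexwise as a direct sum indexed by $Q(i,j)$, and $\mathcal{C}$ satisfies AB4; dually, $e_i^\rho$ is exact by AB4*. Since $e_\lambda^i$ is a left adjoint to the exact functor $(\ )_i$, it preserves projective objects, so for any projective resolution $P_\bullet \to Y$ in $\mathcal{C}$, the complex $e_\lambda^i(P_\bullet) \to e_\lambda^i(Y)$ is a projective resolution in $\Rep(Q,\mathcal{C})$. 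Applying $\Hom_{\Rep(Q,\mathcal{C})}(-, \mathbb{X})$ and using the natural isomorphism from (1) in each degree, we get an isomorphism of cochain complexes
\[
\Hom_{\Rep(Q,\mathcal{C})}(e_\lambda^i(P_\bullet), \mathbb{X}) \;\cong\; \Hom_{\mathcal{C}}(P_\bullet, \mathbb{X}_i),
\]
whose cohomology at degree $m$ yields the desired isomorphism $\Ext^m_{\Rep(Q,\mathcal{C})}(e_\lambda^i(Y), \mathbb{X}) \cong \Ext^m_{\mathcal{C}}(Y, \mathbb{X}_i)$. Part (3) follows dually: $e_i^\rho$ preserves injectives (as an exact right adjoint to an exact functor), so one resolves $Y$ injectively in $\mathcal{C}$ and applies the adjunction from (1) in each degree.

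The only substantive point that requires the standing hypotheses on $\mathcal{C}$ is the exactness of $e_\lambda^i$ and $e_i^\rho$, which is exactly where AB4 and AB4* enter; everything else is formal adjunction bookkeeping. I expect this to be the main (and essentially only) place to be careful, because if those functors failed to be exact, one could not transport resolutions from $\mathcal{C}$ to $\Rep(Q,\mathcal{C})$ and the degree-by-degree argument would collapse.
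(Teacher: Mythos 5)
Your proof is correct. Note that the paper does not prove this lemma itself but cites it from \cite{EEGI,HJ}; your argument---establish the adjunction directly via the universal property of the coproduct $\bigoplus_{Q(i,j)}M$, then observe that $e_\lambda^i$ is exact by AB4 and preserves projectives as a left adjoint to the exact functor $(\ )_i$, so that the $\Ext$ isomorphism follows degree-by-degree by transporting a projective resolution---is exactly the standard proof one finds in those sources, and your identification of AB4/AB4* as the only substantive hypotheses is the right thing to flag.
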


For any representation $(\mathbb{X}_i, \mathbb{X}_{\alpha})$ of $\Rep(Q, \mathcal{C})$, there are induced morphisms
\bc$\varphi_{\mathbb{X}_i}\mathcolon \bigoplus\limits_{t(\alpha)=i}\mathbb{X}_{s(\alpha)}\rightarrow\mathbb{X}_i$ and $\psi_{\mathbb{X}_i}\mathcolon\mathbb{X}_i\rightarrow \prod\limits_{s(\alpha)=i}\mathbb{X}_{t(\alpha)}.$ \ec
We will denote by $\mathbbm{c}_i(\mathbb{X})$ the cokernel of $\varphi_{\mathbb{X}_i}$ and by $\mathbbm{k}_i(\mathbb{X})$ the kernel of $\psi_{\mathbb{X}_i}$. The assignments $\mathbbm{c}_i(-)$  and $\mathbbm{k}_i(-)$ from $\Rep(Q,\mathcal C)$ to $\mathcal C$ are functorial.

\begin{lem}\label{lem 1}\cite[Proposition 5.4]{HJ}
Let $i \in Q_0$ and $\mathbbm{s}_i: \mathcal C\rightarrow \Rep(Q, \mathcal C)$ be the stalk functor given by $\mathbbm{s}_i(Y)_j=\delta_{ij}Y$, where $\delta_{ii}Y = Y$ and $\delta_{ij}Y = 0$ whenever $j \neq i$. Then, we have:
\begin{enumerate}
\item $\mathbbm{s}_i$ is a right adjoint of $\mathbbm{c}_i$ and a left adjoint of $\mathbbm{k}_i$;

\item $\Ext^m_{\Rep(Q, \mathcal C)}(\mathbb{X}, \mathbbm{s}_i(Y)) \cong \Ext_{\mathcal C}^m(\mathbbm{c}_i(\mathbb{X}), Y)$ for every $m \geq 0$, provided that $\varphi_{\mathbb{X}_i}$ is monic.

\item $\Ext_{\Rep(Q, \mathcal C)}^m(\mathbbm{s}_i(Y), \mathbb{X}) \cong \Ext_{\mathcal C}^m(Y, \mathbbm{k}_i(\mathbb{X}))$ for every $m \geq 0$, provided that $\psi_{\mathbb{X}_i}$ is epic.
\end{enumerate}
\end{lem}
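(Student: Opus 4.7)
The plan is to establish the adjunctions in part (1) by direct universal-property arguments, and then leverage them together with a resolution argument to derive the Ext isomorphisms in (2) and (3). For (1), any morphism $f\mathcolon\mathbb{X}\to \mathbbm{s}_i(Y)$ is determined by its single component $f_i\mathcolon \mathbb{X}_i\to Y$, since all other components are forced to vanish. As the structure maps $\mathbbm{s}_i(Y)_\alpha$ are zero, the naturality of $f$ along arrows $\alpha$ with $t(\alpha)=i$ amounts to $f_i\circ \mathbb{X}_\alpha=0$; by the universal property of the cokernel this is equivalent to $f_i$ factoring uniquely through $\mathbbm{c}_i(\mathbb{X})$. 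This yields the natural bijection $\Hom_{\Rep(Q,\mathcal{C})}(\mathbb{X},\mathbbm{s}_i(Y))\cong \Hom_{\mathcal{C}}(\mathbbm{c}_i(\mathbb{X}),Y)$, and a dual inspection yields $\mathbbm{k}_i$ as right adjoint of $\mathbbm{s}_i$.

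For (2), the strategy is to take a projective resolution $P_\bullet\to \mathbb{X}$ in $\Rep(Q,\mathcal{C})$ and argue that $\mathbbm{c}_i(P_\bullet)\to \mathbbm{c}_i(\mathbb{X})$ is a projective resolution of $\mathbbm{c}_i(\mathbb{X})$ in $\mathcal{C}$. Projectivity of each $\mathbbm{c}_i(P_k)$ is automatic because $\mathbbm{s}_i$ is visibly exact (its only nonzero component is the identity on $Y$ at vertex $i$), so its left adjoint $\mathbbm{c}_i$ preserves projectives. Once the resolution is in hand, termwise application of the adjunction from (1) yields an isomorphism of cochain complexes
\begin{equation*}
\Hom_{\Rep(Q,\mathcal{C})}(P_\bullet,\mathbbm{s}_i(Y))\cong \Hom_{\mathcal{C}}(\mathbbm{c}_i(P_\bullet),Y),
\end{equation*}
whose cohomology on the two sides gives exactly the claimed Ext isomorphism.

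The main obstacle is showing that $\mathbbm{c}_i(P_\bullet)$ is exact in positive degrees, and this is precisely where the assumption $\ker\varphi_{\mathbb{X}_i}=0$ enters. I would split the resolution into short exact sequences $0\to K_{n+1}\to P_n\to K_n\to 0$, with $K_0=\mathbb{X}$, and apply the snake lemma to the diagram at vertex $i$ whose columns are the three $\varphi$ maps, whose top row
\begin{equation*}
0\to \bigoplus_{t(\alpha)=i}(K_{n+1})_{s(\alpha)}\to \bigoplus_{t(\alpha)=i}(P_n)_{s(\alpha)}\to \bigoplus_{t(\alpha)=i}(K_n)_{s(\alpha)}\to 0
\end{equation*}
is exact by AB4, and whose bottom row is the evaluation of the short exact sequence at $i$. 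A key auxiliary fact is that $\varphi_{\mathbb{P}_i}$ is monic for every projective $\mathbb{P}\in \Rep(Q,\mathcal{C})$; this follows by noting that any such projective is a direct summand of one of the form $\bigoplus_j e_\lambda^j(P_j)$, for which $\varphi_i$ is the inclusion onto the subobject indexed by paths of length at least one ending at $i$. Granting this and using $\ker\varphi_{\mathbb{X}_i}=0$ as base case, an induction on $n$ shows simultaneously that $\ker\varphi_{(K_n)_i}=0$ for every $n\geq 0$ and that $\mathbbm{c}_i$ preserves each of the above short exact sequences; splicing these delivers the desired exactness of $\mathbbm{c}_i(P_\bullet)$. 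Part (3) is then proved by the formally dual argument: take an injective coresolution $\mathbb{X}\to I^\bullet$, apply $\mathbbm{k}_i$ (which preserves injectives because $\mathbbm{s}_i$ is exact), and use the dual snake lemma together with the hypothesis $\psi_{\mathbb{X}_i}$ epic to conclude that $\mathbbm{k}_i(\mathbb{X})\to \mathbbm{k}_i(I^\bullet)$ is an injective coresolution in $\mathcal{C}$.
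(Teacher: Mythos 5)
The paper itself does not prove this lemma; it cites \cite[Proposition 5.4]{HJ} and moves on, so there is no in-text argument to compare against. Your blind proof is, however, correct and is the natural argument one would expect.

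A few points worth making explicit. Part (1) is a clean universal-property check: a morphism $\mathbb{X}\to\mathbbm{s}_i(Y)$ has no data outside vertex $i$, and the only nontrivial naturality squares (over arrows into $i$) say precisely that $f_i$ kills the image of $\varphi_{\mathbb{X}_i}$; dually for $\mathbbm{k}_i$. For part (2), the three ingredients you invoke are exactly what is needed and they all hold under the running hypotheses of the section ($\mathcal{C}$ abelian with enough projectives and injectives, AB4 and AB4*): $\mathbbm{s}_i$ is exact since exactness in $\Rep(Q,\mathcal{C})$ is componentwise, hence its left adjoint $\mathbbm{c}_i$ preserves projectives; $\Rep(Q,\mathcal{C})$ has enough projectives of the form $\bigoplus_j e_\lambda^j(P_j)$, and $\varphi_{\mathbb{P}_i}$ is monic for such $\mathbb{P}$ (the split inclusion onto paths of length at least one ending at $i$) and hence for any direct summand; and the snake-lemma induction on syzygies propagates monicity of $\varphi$ from $\mathbb{X}$ to all $K_n$ while simultaneously showing $\mathbbm{c}_i$ takes each short exact piece of the resolution to a short exact sequence. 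This gives that $\mathbbm{c}_i(P_\bullet)\to\mathbbm{c}_i(\mathbb{X})$ is a projective resolution, and applying the hom-isomorphism from (1) termwise computes both Ext groups from the same cochain complex. Part (3) is the formal dual using AB4*, enough injectives, and that $\psi_{\mathbb{I}_i}$ is epic for $\mathbb{I}$ a summand of $\prod_j e^\rho_j(I_j)$.

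One small remark: your proof uses that $\Rep(Q,\mathcal{C})$ has enough projectives and injectives, which is guaranteed by the blanket hypotheses in the section where the lemma is placed (and by the construction via $e_\lambda^j$, $e^\rho_j$). It is possible that the cited result in \cite{HJ} is stated and proved under weaker hypotheses (e.g.\ via Yoneda Ext or a dimension-shifting argument not requiring enough projectives), but in the present paper's context your resolution-based argument is perfectly adequate.
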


\begin{cor}\label{cor 1}
Let $Q$ be a quiver without oriented cycles, and let us fix a vertex $k\in Q_0$. Given a class $\mathcal{L}$ of objects of $\mathcal{C}$, for any $G \in {^{\perp_1}\mathcal{L}}$ there is an exact sequence
\[
0 \to \mathbb{K} \to e_\lambda^k(G) \xrightarrow{\widetilde{id}} \mathbbm{s}_k(G) \to 0
\]
in ${\rm Rep}(Q,\mathcal{C})$ with $\widetilde{id} = \delta_{ki}id_G$. Moreover, for any $\mathbb{X}\in{\Rep}(\mathcal C,Q)$, if $\mathbbm{k}_k(\mathbb{X})\in\mathcal{L}$ and $\psi_{\mathbb{X}_k}$ is epic, then the above sequence  is ${\Hom}_{\Rep(Q,\mathcal C)}(-,\mathbb{X})$ exact.
\end{cor}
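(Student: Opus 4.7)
The plan is to construct the sequence, verify its exactness, and then deduce $\Hom_{\Rep(Q,\mathcal C)}(-,\mathbb X)$-acyclicity by reducing to a single $\Ext^1$-vanishing via Lemma~\ref{lem 1}(3).

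For the construction, I would first exploit the acyclicity of $Q$ to identify $e_\lambda^k(G)_k = G$, since the only path from $k$ to itself is the trivial one. Then $\widetilde{id}$ is defined componentwise as $\mathrm{id}_G$ at vertex $k$ and as $0$ elsewhere, which matches $\mathbbm{s}_k(G)$ at every vertex. To verify naturality over an arrow $\alpha\colon i \to j$, the only delicate case is $j = k$ with $i \neq k$, where one needs $e_\lambda^k(G)_\alpha = 0$; this follows from the no-oriented-cycle hypothesis, since a path from $k$ to $i$ followed by $\alpha$ would produce a cycle, forcing $Q(k,i) = \emptyset$ and hence $e_\lambda^k(G)_i = 0$. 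Vertex-wise, $\widetilde{id}$ is either an isomorphism or a zero map onto a zero object, so the kernel $\mathbb K$, characterised by $\mathbb K_k = 0$ and $\mathbb K_j = e_\lambda^k(G)_j$ for $j \neq k$, fits into a short exact sequence in $\Rep(Q,\mathcal C)$.

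For the second assertion, I would apply $\Hom_{\Rep(Q,\mathcal C)}(-,\mathbb X)$ to the short exact sequence and use the induced long exact $\Ext$-sequence; the right exactness of the resulting $\Hom$-sequence is equivalent to the vanishing of $\Ext^1_{\Rep(Q,\mathcal C)}(\mathbbm{s}_k(G),\mathbb X)$. Under the hypothesis that $\psi_{\mathbb X_k}$ is epic, Lemma~\ref{lem 1}(3) gives
\[
\Ext^1_{\Rep(Q,\mathcal C)}(\mathbbm{s}_k(G),\mathbb X) \cong \Ext^1_{\mathcal C}(G,\mathbbm{k}_k(\mathbb X)),
\]
which vanishes because $\mathbbm{k}_k(\mathbb X) \in \mathcal L$ and $G \in {}^{\perp_1}\mathcal L$. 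The only genuinely subtle point in the argument is the naturality verification for $\widetilde{id}$ via the no-cycle assumption; after that, the $\Hom$-acyclicity is a one-line application of Lemma~\ref{lem 1}(3), and I do not anticipate further obstacles.
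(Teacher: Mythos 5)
Your proof is correct and follows the only reasonable route, namely the one the paper leaves implicit: construct $\widetilde{id}$ vertex-wise, use the absence of oriented cycles to get $e_\lambda^k(G)_k = G$ and to rule out paths in $Q(k,i)$ when an arrow $i\to k$ exists (which is exactly the naturality check that the paper passes over silently), and then invoke Lemma~\ref{lem 1}(3) with $m=1$ to kill the obstruction to surjectivity. One small imprecision: the right exactness of the $\Hom$ sequence is \emph{implied by} the vanishing of $\Ext^1_{\Rep(Q,\mathcal C)}(\mathbbm{s}_k(G),\mathbb X)$, not equivalent to it — it is in fact equivalent to the connecting morphism $\Hom(\mathbb K,\mathbb X)\to \Ext^1(\mathbbm{s}_k(G),\mathbb X)$ being zero — but since only the sufficient direction is used, this does not affect the argument.
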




\subsection*{Induced classes in $\Rep(Q,\mathcal C)$}

Let $\mathcal{L}$ be a class of objects of $\mathcal{C}$. Following \cite{HJ} we denote by
\begin{align*}
\Rep(Q,\mathcal{L}) & :=\{\mathbb{X} \in \Rep(Q,\mathcal C) \ | \ \mathbb{X}_i\in\mathcal{L}\mbox{ for all } i\in Q_0\}, \\
\Phi(\mathcal{L}) & := \{\mathbb{X} \in \Rep(Q,\mathcal{L}) \ | \ \varphi_{\mathbb{X}_i}\mbox{ is monic and } \mathbbm{c}_i(\mathbb{X})\in\mathcal{L} \mbox{ for all } i \in Q_0 \}, \\
\Psi(\mathcal{L}) & := \{\mathbb{X} \in \Rep(Q,\mathcal{L}) \ | \ \psi_{\mathbb{X}_i}\mbox{ is epic and } \mathbbm{k}_i(\mathbb{X})\in\mathcal{L} \mbox{ for all } i \in Q_0 \}.
\end{align*}

For the following result, recall that a quiver $Q$ is \emph{discrete} if there are no arrows between its vertexes. So $Q$ is \emph{non-discrete} if there exist at least two vertexes with at least one arrow between them.

\begin{prop}\label{prop 1}
Let $Q$ be a non-discrete quiver without oriented cycles. With the notation above, assume that $(\Phi(\mathcal{F}),\Psi(\mathcal{L}))$ is a balanced pair in $\Rep(Q,\mathcal C)$ for certain classes $\mathcal{F}$ and $\mathcal{L}$ in $\mathcal C$. Then, the following statements holds:
\begin{enumerate}
\item $(\mathcal{F},\mathcal{L})$ is a balanced pair in $\mathcal{C}$.

\item If $\mathcal{F}$ is resolving, then ${^{\perp_1}\mathcal{L}}\subseteq\mathcal{F}^{\perp_1}$.

\item If $\mathcal{L}$ is coresolving, then $\mathcal{F}^{\perp_1}\subseteq{^{\perp_1}\mathcal{L}}$.
\end{enumerate}
\end{prop}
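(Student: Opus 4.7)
My overall approach is to transfer information between $\mathcal{C}$ and $\Rep(Q,\mathcal{C})$ via the adjoint functors $e_\lambda^k\dashv(-)_k\dashv e_k^\rho$ and $\mathbbm{c}_k\dashv\mathbbm{s}_k\dashv\mathbbm{k}_k$, together with the $\Ext$-identifications in Lemmas~\ref{lem 2} and \ref{lem 1}, the characterizations of balance from Lemma~\ref{lem 3}(b)--(c), and Corollary~\ref{cor 1} with its dual.

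For (1), I will verify condition (c) of Lemma~\ref{lem 3} in $\mathcal{C}$. Given $M\in\mathcal{C}$ and a vertex $k\in Q_0$, applying Lemma~\ref{lem 3}(c) to the balanced pair $(\Phi(\mathcal{F}),\Psi(\mathcal{L}))$ at $\mathbbm{s}_k(M)$ yields a short exact sequence $0\to\mathbb{K}\to\mathbb{F}\to\mathbbm{s}_k(M)\to 0$ in $\Rep(Q,\mathcal{C})$ with $\mathbb{F}\in\Phi(\mathcal{F})$ which is both $\Hom_{\Rep(Q,\mathcal C)}(\Phi(\mathcal{F}),-)$- and $\Hom_{\Rep(Q,\mathcal C)}(-,\Psi(\mathcal{L}))$-acyclic. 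Evaluating at $k$ produces a short exact sequence $0\to\mathbb{K}_k\to\mathbb{F}_k\to M\to 0$ in $\mathcal{C}$ with $\mathbb{F}_k\in\mathcal{F}$. Since $Q$ has no oriented cycles, $e_\lambda^k(F)\in\Phi(\mathcal{F})$ for $F\in\mathcal{F}$ and $e_k^\rho(L)\in\Psi(\mathcal{L})$ for $L\in\mathcal{L}$; the adjunction isomorphisms $\Hom_{\mathcal{C}}(F,\mathbb{X}_k)\cong\Hom_{\Rep(Q,\mathcal C)}(e_\lambda^k(F),\mathbb{X})$ and $\Hom_{\mathcal{C}}(\mathbb{X}_k,L)\cong\Hom_{\Rep(Q,\mathcal C)}(\mathbb{X},e_k^\rho(L))$ then transport this acyclicity back to the $\Hom_{\mathcal{C}}(\mathcal{F},-)$- and $\Hom_{\mathcal{C}}(-,\mathcal{L})$-acyclicity of the restricted sequence. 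A dual use of Lemma~\ref{lem 3}(c) on the coresolution side furnishes the required $\mathcal{L}$-coresolution of $M$, and then Lemma~\ref{lem 3} yields (1).

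For (2), let $G\in{^{\perp_1}\mathcal{L}}$ and $F\in\mathcal{F}$. Using an arrow $k\to l$ guaranteed by non-discreteness of $Q$, Corollary~\ref{cor 1} provides a short exact sequence
\[
0\to\mathbb{K}\to e_\lambda^k(G)\to\mathbbm{s}_k(G)\to 0
\]
in $\Rep(Q,\mathcal{C})$ that is $\Hom_{\Rep(Q,\mathcal C)}(-,\Psi(\mathcal{L}))$-acyclic; being in particular right exact, Lemma~\ref{lem 3}(b) applied to the balanced pair forces it to be $\Hom_{\Rep(Q,\mathcal C)}(\Phi(\mathcal{F}),-)$-acyclic. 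Using the resolving hypothesis on $\mathcal{F}$, fix a projective presentation $0\to\Omega F\to P\to F\to 0$ with $\Omega F,P\in\mathcal{F}$ and construct an object $\mathbb{F}\in\Phi(\mathcal{F})$ supported along $k\to l$ with $\mathbb{F}_k=\Omega F$, $\mathbb{F}_l=P$, structure morphism at $k\to l$ equal to the inclusion $\Omega F\hookrightarrow P$, and further vertices padded so that the monomorphism and cokernel conditions of $\Phi$ persist everywhere; this ensures $\mathbbm{c}_l(\mathbb{F})=F$. Lemma~\ref{lem 1}(2) then identifies $\Hom_{\Rep(Q,\mathcal C)}(\mathbb{F},\mathbbm{s}_k(G))\cong\Hom_{\mathcal{C}}(F,G)$, and tracking the $\Hom_{\Rep(Q,\mathcal C)}(\mathbb{F},-)$-surjection coming from the displayed sequence through the long exact sequence of $\Ext^\bullet$ translates into the statement that every morphism $\Omega F\to G$ extends to $P\to G$, which is precisely $\Ext^1_{\mathcal{C}}(F,G)=0$.

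Part (3) is entirely dual: for $H\in\mathcal{F}^{\perp_1}$ and $L\in\mathcal{L}$, the dual of Corollary~\ref{cor 1} (built from the monomorphism $\mathbbm{s}_k(H)\hookrightarrow e_k^\rho(H)$) is $\Hom_{\Rep(Q,\mathcal C)}(\Phi(\mathcal{F}),-)$-acyclic and hence, by balance and Lemma~\ref{lem 3}(b), also $\Hom_{\Rep(Q,\mathcal C)}(-,\Psi(\mathcal{L}))$-acyclic; combined with an injective copresentation of $L$ (supplied by the coresolving hypothesis on $\mathcal{L}$) and Lemma~\ref{lem 1}(3), this yields $\Ext^1_{\mathcal{C}}(H,L)=0$. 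The main obstacle lies in the explicit construction in (2) (and dually in (3)) of the auxiliary representation $\mathbb{F}\in\Phi(\mathcal{F})$ that realizes a prescribed $F\in\mathcal{F}$ as $\mathbbm{c}_l(\mathbb{F})$: the defining monomorphism and cokernel conditions of $\Phi(\mathcal{F})$ must hold at every vertex of $Q$ simultaneously, which is feasible precisely because $Q$ has no oriented cycles, while non-discreteness is what guarantees the Corollary~\ref{cor 1} sequence is non-trivial (at a sink $k$ one would have $e_\lambda^k(G)=\mathbbm{s}_k(G)$ and the $\Ext$-chasing would degenerate).
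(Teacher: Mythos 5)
Your overall strategy for all three parts coincides with the paper's, and part~(1) is essentially right: apply Lemma~\ref{lem 3}(c) to $(\Phi(\mathcal{F}),\Psi(\mathcal{L}))$ at $\mathbbm{s}_k(M)$, restrict to the vertex $k$, and use the adjunctions $e_\lambda^k\dashv(-)_k\dashv e_k^\rho$ to transfer the $\Hom$-acyclicity (one should say ``left exact'', not ``short exact'', and should note that the precovering/preenveloping properties of $\mathcal{F}$ and $\mathcal{L}$ are established along the way, as Lemma~\ref{lem 3} takes them as hypotheses).

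In part~(2), though, there is a genuine gap exactly at the step you yourself flag as ``the main obstacle''. You want an object $\mathbb{F}\in\Phi(\mathcal{F})$ ``supported along $k\to l$'' with $\mathbb{F}_k=\Omega F$, $\mathbb{F}_l=P$, but for a general quiver without oriented cycles such an object cannot be zero off $\{k,l\}$: if $l$ has an outgoing arrow $l\to m$, then $\varphi_{\mathbb{F}_m}$ would be a map from a summand containing $P$ to $\mathbb{F}_m=0$ and hence not monic, violating the $\Phi$-condition; the same problem arises at any vertex reached from $k$ by an arrow other than $k\to l$. The ``padding'' you gesture at is precisely where the real work lies, and the natural padding does not give $\mathbbm{c}_l(\mathbb{F})=F$. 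The paper instead takes $\mathbb{K}:=\ker\bigl(e_\lambda^k(P)\twoheadrightarrow\mathbbm{s}_k(F)\bigr)$, which has $\mathbb{K}_k=\Omega F$ but $\mathbb{K}_j=\bigoplus_{Q(k,j)}P$ for $j\neq k$, verifies $\mathbb{K}\in\Phi(\mathcal{F})$ via a componentwise snake-lemma argument using the resolving hypothesis on $\mathcal{F}$, and never needs any vertexwise cokernel to equal $F$. There is also a misapplication of Lemma~\ref{lem 1}(2): that lemma gives $\Hom_{\Rep(Q,\mathcal C)}(\mathbb{F},\mathbbm{s}_k(G))\cong\Hom_{\mathcal{C}}(\mathbbm{c}_k(\mathbb{F}),G)$, whose right-hand side with your construction is $\Hom_{\mathcal{C}}(\Omega F,G)$, not $\Hom_{\mathcal{C}}(F,G)$ as you write (this is actually what you end up wanting, so the confusion is self-defeating rather than fatal, but it should be cleaned up). The paper avoids Lemma~\ref{lem 1} entirely in part~(2): after lifting $\widetilde{f}\colon\mathbb{K}\to\mathbbm{s}_k(G)$ to $\widetilde{g}\colon\mathbb{K}\to e_\lambda^k(G)$ through the $\Hom(\Phi(\mathcal{F}),-)$-exact sequence of Corollary~\ref{cor 1}, it simply reads off the component equation at the fixed arrow $\alpha\colon k\to i$ and composes with the canonical projection $\pi_\alpha$ to produce the desired extension $P\to G$. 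You should adopt that direct tracking and drop the $\mathbbm{c}_l(\mathbb{F})=F$ requirement; the same remarks apply dually to your sketch of part~(3).
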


\pf
Let us prove (1) and (2). Part (3) is dual to (2).
\begin{enumerate}
\item  For any object $M \in \mathcal C$, there is a $\Phi(\mathcal{F})$-precover $\sigma\mathcolon \mathbb{F}\rightarrow\mathbbm{s}_i(M)$. Then, we claim that $\widetilde{\sigma}_i\mathcolon\mathbb{F}_i\rightarrow\mathbbm{s}_i(M)_i=M$ is an $\mathcal{F}$-precover of $M$, where $\widetilde{\sigma}_i$ is induced by $\sigma$. In fact, for any $F\in\mathcal{F}$, one can note that the representation $e_\lambda^i(F) $ belongs to $\Phi(\mathcal{F})$. Then, we have an epimorphism
\[
{\Hom}_{\Rep(Q,\mathcal C)}(e_\lambda^i(F), \mathbb{F})\rightarrow{\Hom}_{\Rep(Q,\mathcal C)}(e_\lambda^i(F), \mathbbm{s}_i(M))
\]
which implies by Lemma~\ref{lem 2} an epimorphism ${\Hom}_{\mathcal C}(F, \mathbb{F}_i)\rightarrow{\Hom}_{\mathcal C}(F, M)$, as desired.

Since $(\Phi(\mathcal{F}),\Psi(\mathcal{L}))$ is a balanced pair in ${\rm Rep}(Q,\mathcal{C})$ and $e^\rho_i(L)\in\Psi(\mathcal{L})$ for any $L\in \mathcal{L}$, we have by Lemma \ref{lem 3} an exact sequence
\begin{align*}
\hspace{1.5cm} 0 \rightarrow \Hom_{\Rep(Q,\mathcal C)}(\mathbbm{s}_i(M),e^\rho_i(L)) \rightarrow \Hom_{\Rep(Q,\mathcal C)}(\mathbb{F},e^\rho_i(L)) \rightarrow \Hom_{\Rep(Q,\mathcal C)}(\mathbb{K},e^\rho_i(L)) \rightarrow 0
\end{align*}
with $\mathbb{K}=\ker(\sigma)$. Now by part (1) of Lemma \ref{lem 2}, we have an exact sequence
\[
0\rightarrow{\Hom}_{\mathcal C}(M,L)\rightarrow{\Hom}_{\mathcal C}(\mathbb{F}_i,L)\rightarrow{\Hom}_{\mathcal C}(\mathbb{K}_i,L)\rightarrow0.
\]
Thus the left exact sequence
\[
0 \rightarrow \mathbb{K}_i \rightarrow \mathbb{F}_i \rightarrow M \rightarrow 0
\]
is ${\Hom}_{\mathcal{C}}(-,\mathcal{L})$ and ${\Hom}_{\mathcal{C}}(\mathcal{F},-)$ exact. Similarly, we have that $\mathcal{L}$ is preenveloping and that there is a right exact sequence $0\rightarrow M\rightarrow L\rightarrow C\rightarrow0$ in $\mathcal{C}$, which is ${\Hom}_{\mathcal C}(\mathcal{F},-)$-acyclic and ${\Hom}_{\mathcal C}(-,\mathcal{L})$-acyclic. Therefore, by Lemma \ref{lem 3} the pair $(\mathcal{F,L})$ is balanced.

\item Before proving the statement, we need to make some observations.
\begin{itemize}
\item Since $Q$ is non-discrete, we can fix a non-sink vertex $k \in Q_0$. This means that there exists at least an arrow $k \to i$ in $Q$.

\item Let  $F\in\mathcal{F}$ and  $\sigma\mathcolon P\rightarrow F$ be an epimorphism with $P$ projective. Then, we have an induced epimorphism
\[
\widetilde{\sigma}\mathcolon\mathbb{P} = e_\lambda^{k}(P) \rightarrow \mathbbm{s}_{k}(F) = \mathbb{F}
\]
in $\Rep(Q,\mathcal C)$ with $\widetilde{\sigma}_i = \delta_{k i} \sigma$ for any vertex $i \in Q_0$. Let $\mathbb{K}=\ker(\widetilde{\sigma})$ and let us show $\mathbb{K}\in\Phi(\mathcal{F})$.

For each vertex $i \in Q_0$, we have the following exact commutative diagram in $\mathcal{C}$:
\[
\begin{tikzpicture}
\matrix (m) [matrix of math nodes, row sep=3em, column sep=3em, text height=1.25ex, text depth=0.5ex]
{
0 & \bigoplus \limits_{t(\alpha) = i} \mathbb{K}_{s(\alpha)} & \bigoplus\limits_{t(\alpha) = i} \mathbb{P}_{s(\alpha)} & \bigoplus \limits_{t(\alpha) = i} \mathbb{F}_{s(\alpha)} & 0 \\
0 & \mathbb{K}_i & \mathbb{P}_i & \mathbb{F}_i & 0 \\
};
\path[->]
(m-1-2) edge node[right] {\footnotesize$\varphi_{{\mathbb{K}}_i}$} (m-2-2)
(m-1-3) edge node[right] {\footnotesize$\varphi_{{\mathbb{P}}_i}$} (m-2-3)
(m-1-4) edge node[right] {\footnotesize$\varphi_{{\mathbb{F}}_i}$} (m-2-4)
(m-1-1) edge (m-1-2) (m-1-2) edge (m-1-3) (m-1-3) edge (m-1-4) (m-1-4) edge (m-1-5)
(m-2-1) edge (m-2-2) (m-2-2) edge (m-2-3) (m-2-3) edge (m-2-4) (m-2-4) edge (m-2-5)
;
\end{tikzpicture}
\]
Since $\mathcal F$ contains the projective objects, we follow that $\mathbb{P}=e_\lambda^k(P) $ belongs to $\Phi(\mathcal{F})$. So, in particular, the morphism $\varphi_{\mathbb{P}_i}$ is monic for any vertex $i \in Q_0$. It follows that $\varphi_{\mathbb{K}_i}$ is monic since $\varphi_{\mathbb{P}_i}$ is monic. By the snake lemma, we get the exact sequence
\[
\xymatrix{
0 \ar[r] & \ker(\varphi_{\mathbb{F}_i}) \ar[r] &  \mathbbm{c}_i(\mathbb{K}) \ar[r] & \mathbbm{c}_i(\mathbb{P}) \ar[r] & \mathbbm{c}_i(\mathbb{F}) \ar[r] & 0.
}
\]
Note that $\ker(\varphi_{\mathbb{F}_i}),\mathbbm{c}_i(\mathbb{P}), \mathbbm{c}_i(\mathbb{F})\in\mathcal{F}$ and $\mathcal{F}$ is resolving. It follows that $\mathbbm{c}_i(\mathbb{K})\in\mathcal{F}$. Thus $\mathbb{K}\in\Phi(\mathcal{F})$.

\item Moreover, for any arrow $\alpha \mathcolon k \rightarrow i$ with $i \neq k$, we have the commutative diagram
\[
\xymatrix{
0 \ar[r] & \mathbb{K}_{k} = \ker(\sigma)\ar[d] \ar[r]^l & \mathbb{P}_{k} = P \ar[d]^{\mathbb{P}_\alpha} \ar[r]^\sigma & \mathbb{F}_{k} = F \ar[d] \ar[r] & 0 \\
0 \ar[r] & \mathbb{K}_i \ar[r]^(.4){=} & \mathbb{P}_i = \bigoplus \limits_{Q(k,i)} P \ar[r] & \mathbb{F}_i = 0 \ar[r] & 0
}
\]
where $l$ and $\mathbb{P}_\alpha$ are canonical injections.
\end{itemize}

Let us prove now the claim (2). So let $G\in {^{\perp_1}\mathcal{L}}$. We want to show that $G\in\mathcal{F}^{\perp_1}$. Given $F\in\mathcal{F}$, we have the previous exact sequence
\[
0\to \ker(\sigma)\stackrel{l}{\to} P\stackrel{\sigma}{\to} F\to 0,
\]
with $P$ projective. Then to get what we claim, it suffices to show that any $f\mathcolon \ker(\sigma)\to G$ can be lifted to a map $P\to G$, that is, the previous sequence is $\Hom_{\mathcal C}(-,G)$ exact. So, let $f\mathcolon \ker(\sigma)\rightarrow G$  be any morphism and let $\widetilde{f}\mathcolon\mathbb{K}\rightarrow\mathbbm{s}_k(G)$ be the induced morphism in $\Rep(Q,\mathcal C)$ with $\widetilde{f}_{ki} = \delta_{ij}f.$ Note that, since $G\in  {^{\perp_1}\mathcal{L}}$, we get from Corollary \ref{cor 1} and the hypothesis on the balance that $\widetilde{id}\mathcolon e_\lambda^k(G)\rightarrow\mathbbm{s}_k(G)\rightarrow0$ is $\Hom_{\mathcal C}(\Phi(\mathcal{F}),-)$ exact. And we have previously proved that $\mathbb{K}\in \Phi(\mathcal F)$. Therefore, for the map $\widetilde{f}\mathcolon\mathbb{K}\rightarrow\mathbbm{s}_k(G)$,  there is $\widetilde{g}:\mathbb{K}\rightarrow e_\lambda^k(G)$ such that $\widetilde{f}=\widetilde{id}\widetilde{g}$. In particular, for the arrow $\alpha\mathcolon k\rightarrow i$, we have the following commutative diagram
\[
\begin{tikzpicture}[description/.style={fill=white,inner sep=2pt}]
\matrix (m) [matrix of math nodes, row sep=2em, column sep=2em, text height=1.5ex, text depth=0.25ex]
{
{} & {} & {\rm Ker}(\sigma) & {} & {} \\
{} & G & {} & G & 0 \\
{} & {} & \mathbb{K}_i & {} & {} \\
G & \bigoplus_{Q(k,i)} G & {} & 0 & 0 \\
};
\path[-latex]
(m-1-3) edge node[below,sloped] {\footnotesize$\hspace{-1.25cm}\mathbb{P}_\alpha l$} (m-3-3)
(m-2-2) edge [-,line width=6pt,draw=white] (m-2-4) edge [,->] (m-2-4)
(m-1-3) edge node[above,sloped] {\footnotesize$\tilde{g}_k$} (m-2-2)
(m-1-3) edge node[above,sloped] {\footnotesize$f$} (m-2-4)
(m-2-4) edge (m-2-5)
(m-2-2) edge node[left] {\footnotesize$e^k_{\lambda}(G)_\alpha$} (m-4-2)
(m-2-4) edge (m-4-4)
(m-3-3) edge node[above,sloped] {\footnotesize$\tilde{g}_i$} (m-4-2)
(m-4-2) edge node[below] {\footnotesize$\pi_\alpha$} (m-4-1)
(m-4-2) edge (m-4-4)
(m-4-4) edge (m-4-5)
(m-3-3) edge (m-4-4)
;
\end{tikzpicture}
\]
It follows that $\widetilde{g}_i\mathbb{P}_\alpha l = {e_\lambda^k(G)}_\alpha\widetilde{g}_k$. Let $\pi_{\alpha}$ be the canonical projection corresponding to the canonical injection ${e_\lambda^k(G)}_\alpha$, and so
\[
f = \widetilde{g}_k=\pi_{\alpha}{e_\lambda^k(G)}_\alpha\widetilde{g}_k=(\pi_{\alpha}\widetilde{g}_i)\circ (\mathbb{P}_\alpha l)=(\pi_{\alpha}\widetilde{g}_i \mathbb{P}_\alpha)\circ  l.
\]
That is, the sequence $0 \rightarrow \ker(\sigma) \rightarrow P\rightarrow F\rightarrow 0$ is $\Hom_{\mathcal C}(-,G)$ exact,  and so $G\in\mathcal{F}^{\perp_1}$.
\end{enumerate}
\epf

For the following results, recall (see e.g. \cite{HJ}) that a quiver $Q$ is said to be \emph{left rooted} if it contains no paths of the form $\cdots \to \bullet \to \bullet \to \bullet$. Dually, $Q$ is called \emph{right rooted} if it contains no paths of the form $\bullet \to \bullet \to \bullet \to \cdots$.

Let us focus now in the case $\mathcal C=\mathsf{Mod}(R)$. If the quiver $Q$ is \emph{left and right rooted} (for instance the quiver $\cdots \rightarrow \bullet \leftarrow \bullet \rightarrow \bullet \leftarrow \bullet \rightarrow \bullet \leftarrow \bullet \rightarrow \cdots$) we can combine \cite[Theorems A and B]{HJ} and \cite[Theorem A]{EHHS} to infer that, in case we start with two complete hereditary cotorsion pairs $(\mathcal{F},\mathcal{H})$ and $(\mathcal{G},\mathcal{L})$ in $\mathsf{Mod}(R)$, then we get two induced complete hereditary cotorsion pairs $(\Phi(\mathcal{F}), \Rep(Q,\mathcal{H}))$ and $(\Rep(Q,\mathcal{G}), \Psi(\mathcal{L}))$ in $\Rep(Q,\mathsf{Mod}(R))$. Therefore, from the previous proposition and Corollary~\ref{cor.b.pairs.cot.triplets} we get the following result.

\begin{cor}\label{cor 2}
If $(\mathcal{F},\mathcal{H})$ and $(\mathcal{G},\mathcal{L})$ are complete hereditary cotorsion pairs in $\mathsf{Mod}(R)$, then the following are equivalent:
\begin{itemize}
\item [(a)]$\mathcal{H}=\mathcal{G}$

\item [(b)] $(\Phi(\mathcal{F}),\Psi(\mathcal{L}))$ is a balanced pair for any non-discrete left and right rooted quiver $Q$.

\item [(c)] $(\Phi(\mathcal{F}),\Psi(\mathcal{L}))$ is a balanced pair for some non-discrete left and right rooted quiver $Q$.
\end{itemize}
\end{cor}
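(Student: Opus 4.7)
The plan is to close the cycle $\text{(a)} \Rightarrow \text{(b)} \Rightarrow \text{(c)} \Rightarrow \text{(a)}$, using the lifting of cotorsion pairs from $\mathsf{Mod}(R)$ to $\Rep(Q,\mathsf{Mod}(R))$ in one direction and Proposition~\ref{prop 1} in the other. Almost all the work has already been done earlier in the paper, so the proof amounts to assembling the right pieces.

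For $\text{(a)} \Rightarrow \text{(b)}$, I would assume $\mathcal{H} = \mathcal{G}$, so that $(\mathcal{F},\mathcal{G},\mathcal{L})$ is a complete hereditary cotorsion triplet in $\mathsf{Mod}(R)$. Fix any left and right rooted quiver $Q$. Invoking the induction theorems of Holm--J\o rgensen \cite{HJ} combined with the hereditariness statement of \cite{EHHS} (as the authors recall just before the corollary), both $(\mathcal{F},\mathcal{H})$ and $(\mathcal{G},\mathcal{L})$ lift to complete hereditary cotorsion pairs $(\Phi(\mathcal{F}),\Rep(Q,\mathcal{H}))$ and $(\Rep(Q,\mathcal{G}),\Psi(\mathcal{L}))$ in $\Rep(Q,\mathsf{Mod}(R))$. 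The hypothesis $\mathcal{H} = \mathcal{G}$ gives $\Rep(Q,\mathcal{H}) = \Rep(Q,\mathcal{G})$, so these two pairs glue into a complete hereditary cotorsion triplet $(\Phi(\mathcal{F}),\Rep(Q,\mathcal{G}),\Psi(\mathcal{L}))$. Proposition~\ref{prop.relation.balance.triplets} then produces the balanced pair $(\Phi(\mathcal{F}),\Psi(\mathcal{L}))$.

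The step $\text{(b)} \Rightarrow \text{(c)}$ is immediate: take for instance the one-arrow quiver $\bullet \to \bullet$, which is non-discrete and trivially left and right rooted.

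For $\text{(c)} \Rightarrow \text{(a)}$, suppose $(\Phi(\mathcal{F}),\Psi(\mathcal{L}))$ is balanced in $\Rep(Q,\mathsf{Mod}(R))$ for some non-discrete left and right rooted quiver $Q$. Any such $Q$ has no oriented cycles, since a cycle would yield an infinite path of either of the two forbidden shapes. Hence the hypotheses of Proposition~\ref{prop 1} are met. Since $(\mathcal{F},\mathcal{H})$ and $(\mathcal{G},\mathcal{L})$ are hereditary complete cotorsion pairs in $\mathsf{Mod}(R)$, the class $\mathcal{F}$ is resolving and the class $\mathcal{L}$ is coresolving. Parts~(2) and (3) of Proposition~\ref{prop 1} then give ${}^{\perp_1}\mathcal{L} \subseteq \mathcal{F}^{\perp_1}$ and $\mathcal{F}^{\perp_1} \subseteq {}^{\perp_1}\mathcal{L}$, so
\[
\mathcal{G} \;=\; {}^{\perp_1}\mathcal{L} \;=\; \mathcal{F}^{\perp_1} \;=\; \mathcal{H},
\]
which is (a).

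There is no real obstacle once the cited lifting theorems and Proposition~\ref{prop 1} are in hand; the only subtlety to watch is the non-discreteness assumption, which is what lets the authors pick a non-sink vertex $k$ in the proof of Proposition~\ref{prop 1}(2) and is therefore essential for the direction $\text{(c)} \Rightarrow \text{(a)}$. Note that Corollary~\ref{cor.b.pairs.cot.triplets} could alternatively be invoked to finish this last step, but using Proposition~\ref{prop 1}(2)--(3) directly is cleaner since it delivers $\mathcal{G} = \mathcal{H}$ without having to separately verify the containments $\mathcal{F} \cap \mathcal{H} \subseteq \mathcal{G}$ and $\mathcal{G} \cap \mathcal{L} \subseteq \mathcal{H}$.
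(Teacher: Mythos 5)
Your proof is correct and follows essentially the same route as the paper: lift the two cotorsion pairs to $\Rep(Q,\mathsf{Mod}(R))$ via \cite{HJ} and \cite{EHHS} and apply Proposition~\ref{prop.relation.balance.triplets} for (a)$\Rightarrow$(b), and use Proposition~\ref{prop 1}(1)--(3) for (c)$\Rightarrow$(a), after observing that a left and right rooted quiver has no oriented cycles. The only cosmetic difference is that you read off $\mathcal{H}=\mathcal{G}$ directly from Proposition~\ref{prop 1}(2)--(3), whereas the paper's terse justification also names Corollary~\ref{cor.b.pairs.cot.triplets}; as you note, that detour is unnecessary.
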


As a consequence of Corollary \ref{cor 2}, Example~\ref{ex:triplets}~\eqref{ex:quasiFrobenius} and Remark~\ref{rem:quasiFrobenius}, we have the following characterization of quasi-Frobenius rings.

\begin{cor}\label{cor:qf}
A ring $R$ is quasi-Frobenius if and only if $(\Phi(\mathsf{Mod}(R)), \Psi(\mathsf{Mod}(R)))$ is a balanced pair for a non-discrete left and right rooted quiver $Q$. In this case, we have the complete hereditary cotorsion triplet $(\Phi(\mathsf{Mod}(R)),{\rm Rep}(Q,\mathsf{Proj}(R)),\Psi(\mathsf{Mod}(R)))$ in ${\rm Rep}(Q,\mathsf{Mod}(R))$.
\end{cor}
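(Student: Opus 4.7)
The plan is to deduce this corollary by invoking Corollary~\ref{cor 2} applied to the two ``trivial'' complete hereditary cotorsion pairs that exist in $\mathsf{Mod}(R)$ for any ring $R$. Specifically, I take $\mathcal{F} = \mathsf{Mod}(R)$ and $\mathcal{L} = \mathsf{Mod}(R)$. Then the unique complete hereditary cotorsion pair of the form $(\mathsf{Mod}(R),\mathcal{H})$ is $(\mathsf{Mod}(R),\mathsf{Inj}(R))$, because $\mathcal{H} = \mathsf{Mod}(R)^{\perp_1} = \mathsf{Inj}(R)$; dually, the unique complete hereditary cotorsion pair of the form $(\mathcal{G},\mathsf{Mod}(R))$ is $(\mathsf{Proj}(R),\mathsf{Mod}(R))$, since $\mathcal{G} = {}^{\perp_1}\mathsf{Mod}(R) = \mathsf{Proj}(R)$. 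Both of these pairs are indeed complete and hereditary in $\mathsf{Mod}(R)$, so Corollary~\ref{cor 2} applies.

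The equivalence then runs as follows: by Corollary~\ref{cor 2}, the pair $(\Phi(\mathsf{Mod}(R)),\Psi(\mathsf{Mod}(R)))$ is a balanced pair in $\Rep(Q,\mathsf{Mod}(R))$ for some non-discrete left and right rooted quiver $Q$ if and only if $\mathcal{H} = \mathcal{G}$, that is, $\mathsf{Inj}(R) = \mathsf{Proj}(R)$. By Example~\ref{ex:triplets}~\eqref{ex:quasiFrobenius} (or equivalently the discussion in Remark~\ref{rem:quasiFrobenius}), this last equality is precisely the definition of $R$ being quasi-Frobenius. This closes the ``if and only if''.

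For the final sentence, when $R$ is quasi-Frobenius, the induced complete hereditary cotorsion pairs produced in $\Rep(Q,\mathsf{Mod}(R))$ by the machinery of Holm--J\o rgensen \cite{HJ} combined with \cite{EHHS} (as cited in the paragraph preceding Corollary~\ref{cor 2}) are exactly $(\Phi(\mathsf{Mod}(R)),\Rep(Q,\mathsf{Inj}(R)))$ and $(\Rep(Q,\mathsf{Proj}(R)),\Psi(\mathsf{Mod}(R)))$. Since $\mathsf{Inj}(R) = \mathsf{Proj}(R)$ in the quasi-Frobenius case, the right class of the first pair coincides with the left class of the second, yielding the complete hereditary cotorsion triplet $(\Phi(\mathsf{Mod}(R)),\Rep(Q,\mathsf{Proj}(R)),\Psi(\mathsf{Mod}(R)))$ in $\Rep(Q,\mathsf{Mod}(R))$.

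I do not anticipate a substantive obstacle, since all of the work is done by Corollary~\ref{cor 2} together with the identification of the ``trivial'' cotorsion pairs in $\mathsf{Mod}(R)$; the only thing to be careful about is to check that these trivial cotorsion pairs are indeed \emph{complete and hereditary}, which is immediate because $\mathsf{Mod}(R)$ has enough projectives and injectives (so the resolving/coresolving classes $\mathsf{Mod}(R)$ trivially produce complete hereditary cotorsion pairs by Lemma~\ref{lema.complete.hered.sprecovering}).
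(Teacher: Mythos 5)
Your proof is correct and follows essentially the same route the paper intends: instantiate Corollary~\ref{cor 2} with the trivial cotorsion pairs $(\mathsf{Mod}(R),\mathsf{Inj}(R))$ and $(\mathsf{Proj}(R),\mathsf{Mod}(R))$, so that $\mathcal{H}=\mathcal{G}$ becomes $\mathsf{Inj}(R)=\mathsf{Proj}(R)$, i.e.\ quasi-Frobenius as in Example~\ref{ex:triplets}~\eqref{ex:quasiFrobenius} and Remark~\ref{rem:quasiFrobenius}. Your identification of the resulting cotorsion triplet in $\Rep(Q,\mathsf{Mod}(R))$ via the Holm--J\o rgensen induced pairs is also exactly what the paper has in mind.
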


\begin{remark}
The category $\Phi(\mathsf{Mod}(R))$ is known in the literature as \emph{monomorphism category}. It has been extensively studied by Li, Luo and Zhang in \cite{LiZ,LuoZ}. Dually, $\Psi(\mathsf{Mod}(R))$ is called \emph{epimorphism category}.
\end{remark}

Our last result allows to give another extension of the characterization of virtually Gorenstein Noetherian rings of finite Krull dimension given by Zareh-Khoshchehreh, Asgharzadeh and Divaani-Aazar in \cite[Theorem 3.10]{ZAD}. We recall that a ring $R$ is called \emph{left $n$-perfect} if every flat left $R$-module has finite projective dimension $\leq n$.

\begin{cor}\label{cor:ZAD2}
Let $R$ be a left $n$-perfect and right coherent ring. Then, the following conditions are equivalent:
\begin{itemize}
\item[(a)] $R$ is virtually Gorenstein.

\item[(b)] $(\Phi(\mathsf{GProj}(R)),\Psi(\mathsf{GInj}(R)))$ is a balanced pair in $\Rep(Q,\mathsf{Mod}(R))$ for some non-discrete left and right rooted quiver $Q$.

\item[(c)] $(\mathsf{GProj}(R),\mathsf{GInj}(R))$ is a balanced pair in $\mathsf{Mod}(R)$.
\end{itemize}
\end{cor}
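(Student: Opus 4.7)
The plan is to reduce the three-way equivalence to the abstract setup of Corollary~\ref{cor 2}, combined with the triplet--balance interplay furnished by Propositions~\ref{prop.relation.balance.triplets} and \ref{prop:from_balance_to_triplets}. The prerequisite, which will be the technical heart of the matter, is to verify that under the hypotheses that $R$ is left $n$-perfect and right coherent, both
\[
(\mathsf{GProj}(R),\mathsf{GProj}(R)^\perp) \mbox{ \ and \ } ({}^\perp\mathsf{GInj}(R),\mathsf{GInj}(R))
\]
are complete hereditary cotorsion pairs in $\mathsf{Mod}(R)$. The first is provided by the known fact that $\mathsf{GProj}(R)$ is special precovering over left $n$-perfect rings (extending the Enochs--Iacob--Overtoun result beyond the commutative Noetherian finite Krull dimension case), and the second by the corresponding completeness of the Gorenstein injective cotorsion pair, available over any right coherent ring. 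These pairs are hereditary since $\mathsf{GProj}(R)$ is resolving, $\mathsf{GInj}(R)$ is coresolving, and $\mathsf{Mod}(R)$ has enough projectives and injectives. Moreover, the trivial identifications
\[
\mathsf{GProj}(R)\cap \mathsf{GProj}(R)^\perp=\mathsf{Proj}(R)\subseteq {}^\perp\mathsf{GInj}(R), \qquad {}^\perp\mathsf{GInj}(R)\cap \mathsf{GInj}(R)=\mathsf{Inj}(R)\subseteq \mathsf{GProj}(R)^\perp
\]
secure condition (2) of Proposition~\ref{prop:from_balance_to_triplets} for $(\mathcal{F},\mathcal{L})=(\mathsf{GProj}(R),\mathsf{GInj}(R))$.

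By definition, $R$ is virtually Gorenstein exactly when the two middle classes coincide, i.e., $\mathsf{GProj}(R)^\perp={}^\perp\mathsf{GInj}(R)$; this is precisely the equality $\mathcal{H}=\mathcal{G}$ in the language of Corollary~\ref{cor 2}. Therefore the equivalence (a) $\Leftrightarrow$ (b) is immediate from that corollary applied to $(\mathcal{F},\mathcal{H})=(\mathsf{GProj}(R),\mathsf{GProj}(R)^\perp)$ and $(\mathcal{G},\mathcal{L})=({}^\perp\mathsf{GInj}(R),\mathsf{GInj}(R))$.

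For (a) $\Rightarrow$ (c), virtual Gorensteinness produces the complete hereditary cotorsion triplet $(\mathsf{GProj}(R),\mathcal{G},\mathsf{GInj}(R))$ with $\mathcal{G}=\mathsf{GProj}(R)^\perp={}^\perp\mathsf{GInj}(R)$, and Proposition~\ref{prop.relation.balance.triplets} converts it into the admissible balanced pair $(\mathsf{GProj}(R),\mathsf{GInj}(R))$. Conversely, for (c) $\Rightarrow$ (a) I would invoke Proposition~\ref{prop:from_balance_to_triplets}: its three hypotheses have been checked in the first paragraph, so balance of $(\mathsf{GProj}(R),\mathsf{GInj}(R))$ yields a complete hereditary cotorsion triplet $(\mathsf{GProj}(R),\mathcal{G},\mathsf{GInj}(R))$, which forces $\mathsf{GProj}(R)^\perp={}^\perp\mathsf{GInj}(R)$, hence virtual Gorensteinness.

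The main obstacle is not the formal part of the argument but the upstream step of justifying the completeness of both Gorenstein cotorsion pairs under the weakened hypotheses (left $n$-perfect, right coherent); once those are in place, everything else is a direct application of Corollary~\ref{cor 2} together with Propositions~\ref{prop.relation.balance.triplets} and \ref{prop:from_balance_to_triplets}.
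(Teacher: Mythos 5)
Your argument is essentially the paper's proof: the paper also obtains (a) $\Leftrightarrow$ (c) by feeding the two complete hereditary Gorenstein cotorsion pairs into Corollary~\ref{cor.b.pairs.cot.triplets} (which is precisely the packaged form of Propositions~\ref{prop.relation.balance.triplets} and \ref{prop:from_balance_to_triplets} that you apply separately), together with the intersection identities $\mathsf{GProj}(R)\cap\mathsf{GProj}(R)^\perp=\mathsf{Proj}(R)$ and ${}^\perp\mathsf{GInj}(R)\cap\mathsf{GInj}(R)=\mathsf{Inj}(R)$, and then obtains (a) $\Leftrightarrow$ (b) from Corollary~\ref{cor 2}. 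One small attribution slip: the paper invokes \v{S}aroch--\v{S}\v{t}ov\'{\i}\v{c}ek to get completeness of $({}^\perp\mathsf{GInj}(R),\mathsf{GInj}(R))$ over \emph{any} ring (not via right coherence), while the combined left $n$-perfect and right coherent hypotheses go into the Estrada--Iacob--Odaba\c{s}\i\ result giving completeness of the $\mathsf{GProj}$ pair.
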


\pf
Firstly we point out that under the assumptions on $R$, the pair $(\mathsf{GProj}(R),\mathsf{GProj}(R)^{\perp})$ is known to be a complete hereditary cotorsion pair (see Estrada, Iacob, Odaba\c s\i\ \cite[Proposition 6]{EIO}). On the other hand, \v{S}aroch and  \v{S}\v{t}ov{\'{\i}}{\v{c}}ek (\cite{SS}) have recently proved that the pair $({^{\perp}\mathsf{GInj}(R)}, \mathsf{GInj}(R))$ is a perfect (so, in particular, complete) and hereditary cotorsion pair for \emph{any} ring.

Now, (a) $\Leftrightarrow$ (c) immediately follows from Corollary~\ref{cor.b.pairs.cot.triplets} by the above and by noticing that
\[
\mathsf{GProj}(R) \cap \mathsf{GProj}(R)^\perp = \mathsf{Proj}(R) \mbox{ \ and \ } {}^{\perp}\mathsf{GInj}(R) \cap \mathsf{GInj}(R) = \mathsf{Inj}(R).
\]
Finally (a) $\Leftrightarrow$ (b) follows from Corollary~\ref{cor 2}.
\epf


\section*{Acknowledgements}

The authors wish to thank Jiansheng Hu for useful comments and suggestions during the preparation of this manuscript.
This research was partially supported by NSFC (11671069), the Zhejiang Provincial Natural Science Foundation of China(LY18A010032).


\bibliographystyle{plain}
\bibliography{references}

\end{document}